\definecolor{Black}{cmyk}{0,0,0,1}
\definecolor{OrangeRed}{cmyk}{0,0.6,1,0}            
\definecolor{DarkBlue}{cmyk}{1,1,0,0.20}
\definecolor{myblue}{rgb}{0.66,0.78,1.00}
\definecolor{Violet}{cmyk}{0.79,0.88,0,0}
\definecolor{Lavender}{cmyk}{0,0.48,0,0}
\newtheorem{theorem}{Theorem}[section]
\newtheorem{lemma}[theorem]{Lemma}
\newtheorem{corollary}[theorem]{Corollary}
\newtheorem{proposition}[theorem]{Proposition}
\theoremstyle{definition}
\newtheorem{remark}[theorem]{Remark}
\newcommand{\B}{\mathbb{B}}
\newcommand{\C}{\mathbb{C}}
\newcommand{\D}{\mathbb{D}}
\newcommand{\N}{\mathbb{N}}
\newcommand{\Aut}{\mathop{{\rm Aut}}}
\author[F. Bracci]{Filippo Bracci$^\ast$}
\address{F. Bracci: Dipartimento di Matematica\\
Universit\`{a} di Roma \textquotedblleft Tor Vergata\textquotedblright\ \\
Via Della Ricerca Scientifica 1, 00133 \\
Roma, Italy} \email{fbracci@mat.uniroma2.it}
\thanks{This article was written as part of the international research program "Several Complex Variables and Complex Dynamics" at the Centre for Advanced Study at the Norwegian Academy of Science and Letters in Oslo during the academic year 2016/2017.}
\thanks{$^\dag$ supported by the NRC grant number 240569}
\thanks{$^\ast$ Partially supported by GNSAGA of INDAM}
\author[J. E. Forn\ae ss]{John Erik Forn\ae ss$^\dag$}
\address{J. E. Forn\ae ss: Department of Mathematical Sciences, Norwegian University of Science and Technology, 7491 Trondheim, Norway. } \email{john.fornass@ntnu.no}
\author[E. F. Wold]{Erlend Forn\ae ss Wold$^\dag$}
\address{E. F. Wold: Department of Mathematics, University of Oslo, PO-BOX 1053 Blindern, 0316 Oslo, Norway.} \email{erlendfw@math.uio.no}
\newcommand{\bea}{\begin{eqnarray*}}
\newcommand{\eea}{\end{eqnarray*}}
\numberwithin{equation}{section}
\title[distances on strongly pseudoconvex and worm domains]{Comparison of invariant metrics and distances on strongly pseudoconvex domains and worm domains}
\begin{document}

\maketitle

\begin{abstract}
We prove that for a strongly pseudoconvex domain $D\subset\mathbb C^n$, the infinitesimal Carath\'{e}odory metric $g_C(z,v)$
and the infinitesimal Kobayashi metric $g_K(z,v)$ coincide if $z$ is sufficiently close to $bD$ and if $v$ is sufficiently close to 
being tangential to $bD$.  Also, we show that every two close points of $D$ sufficiently close to the boundary and whose difference is almost tangential to $bD$ can be joined by a (unique up to reparameterization) complex geodesic of $D$ which is also a holomorphic retract of $D$.

The same continues to hold if $D$ is a worm domain, as long as the points are sufficiently close to a strongly 
pseudoconvex boundary point.   We also show that a strongly pseudoconvex boundary point of a worm domain can be globally exposed; this has 
consequences for the behavior of the squeezing function.  
\end{abstract}

\section{Introduction}

For a domain $D\subset\mathbb C^n$, a point 
$z\in D$, and a vector $v\in T_zD=\mathbb C^n$, $v\neq 0$, we say that a holomorphic map $f_{z,v}:\triangle\rightarrow D$
is {\sl extremal} with respect to $(z,v)$, if $f_{z,v}(0)=z, f'(0)=\lambda v$, for some $\lambda>0$ and for any holomorphic $g:\triangle\rightarrow D, g(0)=z$, 
and $g'(0)=\tilde\lambda v$, we have that $|\tilde\lambda|\leq \lambda$.  A subset $S\subset D$ is a holomorphic retract, if there exists a holomorphic map $r:D\rightarrow D$ such that $r(D)= S$ and $r(z)=z$ for all $z\in S$.  For a bounded strongly convex domain $D$ of class $C^k, k\geq 3$, the following 
is due to Lempert \cite{Lem1, Lem2}: 
\begin{itemize}
\item[(1)]   the extremal map $f_{z,v}$ is unique, 
\item[(2)] $f_{z,v}$ extends to a $C^{k-2}$ map on $\overline\triangle$, and $f_{z,v}$ embeds $\overline\triangle$ into $\overline D$ with $f(b\triangle)\subset bD$,
\item[(3)] the corresponding extremal disc $S=f_{z,v}(\triangle)$ is a holomorphic retract of $D$,
\item[(4)] the extremal map $f_{z,v}$ is a complex geodesic, {\sl i.e.}, it is an isometry between the Poincar\'e distance of $\triangle$ and the Kobayashi distance of $D$,
\item[(5)] any two points $z,w\in D, z\neq w$, can be joined by a complex geodesic $f$, which is unique up to pre-composition with  automorphisms of $\triangle$.  Such a complex geodesic $f$ is an extremal map with respect to $(f(\zeta),f'(\zeta))$ for all $\zeta\in \triangle$ and its image is a holomorphic retract of $D$.
\end{itemize}

A straightforward consequence is that for a strongly convex domain $D$, the Carath\'{e}odory infinitesimal metric $g_C$ and the Kobayashi infinitesimal metric $g_K$ 
coincide, \emph{i.e.}, the quotient $Q(z,v)=g_C(z,v)/g_K(z,v)$ is identically equal to one.  This is no longer the case for non-convex domains, and
our main question here is to what extent it does continue to hold near strictly pseudoconvex boundary points of pseudoconvex domains in general.

For a domain $D\subset\mathbb C^n$ of class $C^2$ we let $\delta(z)$ denote the distance from $z$ to $b D$.
If $\delta(z)$ is small enough, there is a unique point $\pi(z)\in bD$ closest to $z$, and for any vector $v\in T_zD=\mathbb C^n$
there is an orthogonal decomposition $v=v_N+v_T$, with $v_T\in T_{\pi(z)}^{\C}bD$.  Our first two results 
are the following: 

\begin{theorem}\label{thm1}
Let $D\subset\mathbb C^n$ be a bounded strongly pseudoconvex domain of class $C^k$ for $k\geq 3$.  Then 
there exists $\epsilon>0$ such that the following hold: 
\begin{itemize}
\item[a)] for any $z\in D$ with $\delta(z)<\epsilon$ and 
$v\in T_zD\setminus\{0\}$ with $\|v_N\|<\epsilon\|v_T\|$, the extremal map $f_{z,v}$ satisfies (1), (2), (3) and (4) above.
\item[b)] for any $z,w\in D$ such that  $\max\{\delta(z),\delta(w), \mathrm{dist}(z,w)\}<\epsilon$  and 
$\|(w-z)_N\|<\epsilon \|(w-z)_T\|$, there exists a complex geodesic $f$ joining $z$ and $w$ which satisfies (5) above (in particular, $z, w$ are contained in a one-dimensional holomorphic retract of $D$).  As a consequence, the Kobayashi distance between $z$ and $w$ equals the Carath\'eodory  distance between $z$ and $w$.
\end{itemize} 
\end{theorem}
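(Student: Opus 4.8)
The plan is to transport everything to Lempert's theorem on a bounded strongly convex comparison domain by means of the global exposing of strongly pseudoconvex boundary points. Since $bD$ is compact and consists entirely of strongly pseudoconvex points, I would first fix finitely many $p_1,\dots,p_N\in bD$ together with neighbourhoods $W_i\Subset W_i'$ of $p_i$ whose union covers $bD$ and such that, for each $i$, there is an \emph{injective} holomorphic map $F_i\colon D\to\C^n$ with the following properties: $F_i$ is of class $C^{k-2}$ up to $bD$ on $\overline D\cap W_i'$ with non-degenerate differential there; there is a bounded \emph{strongly convex} domain $\Omega_i$ of class $C^k$ with $F_i(D)\subset\Omega_i$ and $F_i(p_i)\in b\Omega_i$; and there is a neighbourhood $V_i$ of $q_i:=F_i(p_i)$ with $F_i(D\cap W_i')=\Omega_i\cap V_i$ (so in particular $F_i(bD\cap W_i')\subset b\Omega_i$). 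The existence of such $F_i$ is precisely the global-exposing property of strongly pseudoconvex boundary points (Diederich–Forn\ae ss–Wold), of which the worm-domain version quoted in the abstract is the analogue; I expect this statement, together with squeezing out enough boundary regularity for $F_i$, to be the main technical point. Finally choose $\epsilon>0$ so small that $\delta(z)<\epsilon$ forces the Euclidean ball $B(z,\epsilon)$ to lie inside one of the $W_i$.

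The second ingredient is a localization lemma for Lempert geodesics of a bounded strongly convex domain $\Omega$ of class $C^k$: given $q\in b\Omega$ and a neighbourhood $V$ of $q$, there is $\epsilon'>0$ such that the complex geodesic of $\Omega$ through two points $a,b\in\Omega$ with $\max\{\delta_\Omega(a),\delta_\Omega(b),\|a-b\|\}<\epsilon'$ and $\|(a-b)_N\|<\epsilon'\|(a-b)_T\|$ is contained in $V$, and likewise the extremal disc of $\Omega$ through $(a,v)$ with $a$ near $q$ and $v$ almost tangential to $b\Omega$. For the ball this is the elementary observation that an almost-tangential chord through a point at depth $\delta$ has both endpoints within $C\sqrt\delta$ of that point; the general strongly convex case follows from Lempert's continuous dependence of geodesics on their data (a non-localizing sequence would subconverge to a geodesic or to a boundary-valued limit, incompatible with the almost-tangential data) or by a scaling/perturbation argument. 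Given Lempert's theory this step is routine.

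Now the transfer. Fix $z$ (respectively $z,w$) as in the statement and choose $i$ so that $z$ (respectively $z$ and $w$) lies in $W_i$; write $F=F_i,\ \Omega=\Omega_i,\ V=V_i,\ q=q_i$. Since $F$ carries $bD\cap W_i'$ into $b\Omega$, the vector $dF(z)v$ (respectively $F(w)-F(z)$, which equals $dF(z)(w-z)$ up to a first-order error) is again almost tangential to $b\Omega$ near $q$, and $F(z)$ (respectively $F(z),F(w)$) is close to $q$; by the localization lemma the associated Lempert geodesic $h\colon\triangle\to\Omega$ has $h(\overline\triangle)\Subset V$, hence $h(\triangle)\subset\Omega\cap V=F(D\cap W_i')$ and $h(b\triangle)\subset b\Omega\cap V=F(bD\cap W_i')$. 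Then $f:=F^{-1}\circ h\colon\triangle\to D$ is a holomorphic disc through $z$ (respectively through $z$ and $w$), and — the point where \emph{global} rather than local exposing is needed — Lempert's retraction $R\colon\Omega\to\Omega$ onto $h(\triangle)$ satisfies $R(\Omega)=h(\triangle)\subset F(D\cap W_i')$, so $r:=F^{-1}\circ R\circ F\colon D\to D$ is a well-defined holomorphic retraction of $D$ onto $S:=f(\triangle)$, which gives (3). Since $h$ embeds $\overline\triangle$ (Lempert (2)) and $F$ is injective, $f\colon\triangle\to S$ is biholomorphic; composing $r$ with $f^{-1}\colon S\to\triangle$ gives $\rho\colon D\to\triangle$ with $\rho\circ f=\mathrm{id}_\triangle$, so for $\zeta_1,\zeta_2\in\triangle$
\[
k_\triangle(\zeta_1,\zeta_2)=c_\triangle(\rho(f(\zeta_1)),\rho(f(\zeta_2)))\le c_D(f(\zeta_1),f(\zeta_2))\le k_D(f(\zeta_1),f(\zeta_2))\le k_\triangle(\zeta_1,\zeta_2),
\]
whence $f$ is simultaneously a Kobayashi and a Carath\'eodory geodesic of $D$; this is (4), and choosing $f(\zeta_1)=z,\ f(\zeta_2)=w$ gives the equality of Kobayashi and Carath\'eodory distances in b). Moreover $F\colon D\to\Omega$ is metric-decreasing, so $g_K^\Omega(F(z),dF(z)v)\le g_K(z,v)$, while the Lempert extremal disc realizing $g_K^\Omega(F(z),dF(z)v)$ is the above $h$, whose image lies in $F(D)$, so $F^{-1}\circ h$ is an admissible disc for $D$ and the reverse inequality holds; hence $g_K(z,v)=g_K^\Omega(F(z),dF(z)v)$, and $F$ carries $D$-extremal discs through $(z,v)$ to $\Omega$-extremal discs through $(F(z),dF(z)v)$. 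Uniqueness (1), property (5), and the $C^{k-2}$ boundary behaviour (2) of $f=f_{z,v}$ are then inherited from the corresponding Lempert properties of $h$ on $\Omega$ via injectivity and $C^{k-2}$ regularity of $F^{\pm1}$ near $bD\cap W_i'$ (together with $h(b\triangle)\subset F(bD\cap W_i')$, which forces $f(b\triangle)\subset bD$); finally $g_C(z,v)=g_K(z,v)$ in a) is immediate, since $f$ being a Carath\'eodory geodesic gives $g_C(z,v)\ge g_K(z,v)$ while $g_C\le g_K$ always.

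The main obstacle is the global exposing theorem itself, and the extraction of sufficient boundary regularity of the exposing maps so that property (2) survives; the localization of geodesics in strongly convex domains, and all remaining steps, are bookkeeping built on Lempert's theorem. It should be stressed that the almost-tangential hypothesis is essential: if the normal part of $v$ (or of $w-z$) is not negligible, the associated Lempert geodesic of $\Omega$ is a ``long'' chord that crosses $\Omega$ and leaves the region $V$ on which $F(D)$ coincides with $\Omega$, so the pullbacks of both the geodesic and the retraction break down — in line with the fact that, for general pseudoconvex $D$, $g_C=g_K$ and the holomorphic-retract conclusion genuinely fail in the normal direction near the boundary.
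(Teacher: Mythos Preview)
Your proposal is correct and follows essentially the same approach as the paper: reduce via the global exposing theorem (Diederich--Forn\ae ss--Wold) to a bounded strongly convex domain $\Omega$ containing $F(D)$ with local coincidence near the image of $p$, invoke a localization result for Lempert geodesics with almost-tangential data, and then pull back the Lempert disc and its retraction. Your ``localization lemma'' is exactly what the paper cites from Huang for the infinitesimal case and proves independently for the two-point case; your suggested contradiction argument (a non-localizing sequence subconverging to a limit geodesic whose derivative at the boundary hit would be complex-tangential, violating the Hopf lemma) is precisely the paper's proof, carried out via intermediate propositions on convergence of complex geodesics in $C^1$ up to $\overline\triangle$. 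One incidental remark: the exposing map can in fact be taken $C^k$ on $\overline D$, not merely $C^{k-2}$, so the boundary regularity in (2) transfers without loss.
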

\begin{corollary}\label{cor1}
Let $D\subset\mathbb C^n$ be a bounded strongly pseudoconvex domain of class $C^k$ for $k\geq 3$.  Then 
there exists $\epsilon>0$ such that $Q(z,v)=1$ if $\delta(z)<\epsilon$ and if $\|v_N\|<\epsilon\|v_T\|$.
\end{corollary}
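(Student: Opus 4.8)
The plan is to read the corollary directly off Theorem~\ref{thm1}(a); the only work is unwinding the definitions of the two infinitesimal metrics. Fix the $\epsilon>0$ provided by Theorem~\ref{thm1} and take $z\in D$, $v\in T_zD\setminus\{0\}$ with $\delta(z)<\epsilon$ and $\|v_N\|<\epsilon\|v_T\|$. Since the Carath\'eodory metric never exceeds the Kobayashi metric we always have $g_C(z,v)\leq g_K(z,v)$, and both metrics are positively homogeneous of degree one in $v$, so $Q(z,v)$ is unchanged under rescaling $v$. Hence it suffices to exhibit a single holomorphic $h:D\to\triangle$ with $h(z)=0$ and $|dh_z(v)|\geq g_K(z,v)$: such an $h$ is an admissible competitor for $g_C(z,v)$ and forces $g_C(z,v)\geq g_K(z,v)$, whence $Q(z,v)=1$.

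First I would normalize: replacing $v$ by a suitable positive multiple, I may assume the extremal map $f:=f_{z,v}$ satisfies $f(0)=z$, $f'(0)=v$, so that (by the very definition of the Kobayashi metric as the reciprocal of the largest admissible scaling) $g_K(z,v)=1$. By Theorem~\ref{thm1}(a), $f$ satisfies properties (2) and (3): it embeds $\triangle$ biholomorphically onto the submanifold $S:=f(\triangle)\subset D$, and $S$ is a holomorphic retract, i.e.\ there is a holomorphic $\rho:D\to D$ with $\rho(D)=S$ and $\rho|_S=\mathrm{id}_S$.

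Next I would put $h:=f^{-1}\circ\rho:D\to\triangle$; this is holomorphic because $f:\triangle\to S$ is biholomorphic and $\rho$ takes values in $S$. Since $f(0)=z$ we get $h(z)=f^{-1}(z)=0$, and since $\rho$ fixes $S$ pointwise we get $\rho\circ f=f$, hence $h\circ f=f^{-1}\circ\rho\circ f=\mathrm{id}_\triangle$. Differentiating at $0$ yields $dh_z(v)=dh_z(f'(0))=(h\circ f)'(0)=1$, so $h$ is an admissible competitor for $g_C(z,v)$ with $|dh_z(v)|=1=g_K(z,v)$. Therefore $g_C(z,v)\geq g_K(z,v)$, and combined with the trivial reverse inequality this gives $g_C(z,v)=g_K(z,v)$, i.e.\ $Q(z,v)=1$.

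I do not expect a real obstacle here: all the analytic input—boundary regularity of $f$, the retraction, the geodesic property—is already packaged in Theorem~\ref{thm1}, and the present statement is purely formal given that input. The only points needing a line of care are the normalization conventions for the infinitesimal metrics and the observation that a holomorphic retract onto an embedded disc automatically supplies the holomorphic left inverse $f^{-1}\circ\rho:D\to\triangle$. Alternatively, one could derive $g_C=g_K$ here from property (4) together with the classical fact that a complex geodesic is simultaneously an isometry for the Carath\'eodory distance, but the left-inverse argument above is shorter and self-contained.
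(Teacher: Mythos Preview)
Your argument is correct and is exactly the standard unwinding the paper has in mind: the paper does not give a separate proof of the corollary, treating it as immediate from Theorem~\ref{thm1}(a) via the well-known fact (recalled in the introduction just before Theorem~\ref{thm1}) that a holomorphic retract onto an extremal disc forces $g_C=g_K$ at the corresponding point and direction. Your left-inverse construction $h=f^{-1}\circ\rho$ is precisely how one makes that ``straightforward consequence'' explicit.
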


The theorem generalises a theorem of Kosinski \cite{Kosinski}, in which he shows that the invariant 
metrics coincide on \emph{some} open set.  Also, as pointed out by the referee, a version of the theorem for strongly pseudoconvex domains with boundaries
of class $C^6$, follows from the arguments in Section 4 in \cite{BurnsKrantz}. \

Considering domains more general than strictly pseudoconvex domains, a natural class of domains to consider 
is the class of Worm-domains (see Section \ref{worm} for the definition); these domains are pseudoconvex but without Stein neighbourhood bases, and 
provide counterexamples to many complex analytic problems of a global character.  

\begin{theorem}\label{thm2}
Let $\Omega_r$ be a Worm domain, and let $p\in b\Omega_r$ be a strongly pseudoconvex boundary point.  
Let $D$ be small open neighborhood of $p$.
Then 
there exists $\epsilon>0$ such that the following hold: 
\begin{itemize}
\item[a)] for any $z\in D\cap \Omega_r$ with $\delta(z)<\epsilon$ and 
$v\in T_zD\setminus\{0\}$ with $\|v_N\|<\epsilon\|v_T\|$, the extremal map $f_{z,v}$ satisfies (1), (2), (3) and (4) above.
\item[b)] for any $z,w\in D\cap \Omega_r$ such that  $\max\{\delta(z),\delta(w), \mathrm{dist}(z,w)\}<\epsilon$  and 
$\|(w-z)_N\|<\epsilon \|(w-z)_T\|$, there exists a complex geodesic $f$ joining $z$ and $w$ which satisfies (5) above (in particular, $z, w$ are contained in a one-dimensional holomorphic retract of $\Omega_r$). As a consequence, the Kobayashi distance between $z$ and $w$ equals the Carath\'eodory  between $z$ and $w$.

\end{itemize} 
\end{theorem}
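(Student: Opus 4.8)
The plan is to reduce the Worm-domain statement to Theorem \ref{thm1} by an exposing/approximation argument localized near the strongly pseudoconvex boundary point $p$. The key point is that although $\Omega_r$ is not strongly pseudoconvex and has no Stein neighborhood basis, all the phenomena in Theorem \ref{thm1} are governed entirely by what happens in a small neighborhood $D$ of $p$, provided $z,w$ (or $z,v$) are close to $p$ and the relevant vector is almost tangential. First I would choose coordinates centered at $p$ so that $b\Omega_r$ is, near $p$, a graph over its tangent hyperplane with a strictly plurisubharmonic local defining function; since $p$ is a strongly pseudoconvex point, after shrinking $D$ we may assume $D\cap\Omega_r$ is strongly pseudoconvex and, by a standard local perturbation (adding a small multiple of $\|z-p\|^2$ to the defining function), is contained in a bounded \emph{globally} strongly pseudoconvex domain $\widetilde D$ with $b\widetilde D$ agreeing with $b\Omega_r$ to high order at $p$ and on a neighborhood of $p$ in the boundary.

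The second step is to transfer extremal discs from $\widetilde D$ to $\Omega_r$. Apply Theorem \ref{thm1} to $\widetilde D$: for $z,w$ sufficiently close to $p$ with $\|(w-z)_N\|<\epsilon\|(w-z)_T\|$ there is a complex geodesic $f\colon\triangle\to\widetilde D$ of $\widetilde D$ joining them, which is a holomorphic retract with retraction $\rho\colon\widetilde D\to f(\triangle)$, extends smoothly to $\overline\triangle$, and has $f(b\triangle)\subset b\widetilde D$. The crucial geometric input — which is where I expect the main work to lie — is that when $z,w$ are close enough to $p$ and almost tangential, the whole disc $f(\overline\triangle)$ stays in the small neighborhood $D$ where $b\widetilde D$ coincides with $b\Omega_r$; this is a quantitative localization statement for extremal discs through near-boundary, near-tangential data, and should follow from the boundary-distance/almost-geodesic estimates for the Kobayashi metric near a strongly pseudoconvex point (the same estimates underpinning Theorem \ref{thm1}), together with the fact that a complex geodesic minimizes Kobayashi length so cannot wander far from the segment $[z,w]$. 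Granting this, $f(\triangle)\subset D\cap\Omega_r\subset\Omega_r$, so $f$ is a holomorphic disc in $\Omega_r$ through $z,w$; since the Kobayashi distance of $\Omega_r$ is dominated by that of $\widetilde D$ on this common region and $f$ is already an isometric embedding for the latter, $f$ is automatically a complex geodesic of $\Omega_r$ as well, proving (4) and (5)'s geodesic assertion.

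The third step is to produce the holomorphic retraction of $\Omega_r$ onto $f(\triangle)$. The retraction $\rho\colon\widetilde D\to f(\triangle)$ from Lempert theory does not a priori restrict to $\Omega_r$, so instead I would use the complex geodesic $f$ together with its "left inverse": there is a holomorphic $\widetilde\rho\colon\widetilde D\to\triangle$ with $\widetilde\rho\circ f=\mathrm{id}_\triangle$ (the projection built from the stationary/extremal disc). One checks that $\widetilde\rho|_{D\cap\Omega_r}$ is defined and, if needed, corrects it so that $f\circ\widetilde\rho$ maps $\Omega_r$ into $\Omega_r$: here one uses that $f(\overline\triangle)\Subset D$ and a cutoff/continuity argument to see $f\circ\widetilde\rho(\Omega_r)\subset f(\triangle)\subset\Omega_r$, or alternatively one extends $\widetilde\rho$ globally over $\Omega_r$ using pseudoconvexity of $\Omega_r$ and an $L^2$/Dolbeault argument à la the construction of peak and support functions. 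Then $r:=f\circ\widetilde\rho\colon\Omega_r\to\Omega_r$ is the desired retraction onto $f(\triangle)$, giving (3), and (1)–(2) for part a) follow by the same localization applied to the extremal disc $f_{z,v}$ determined by near-tangential $(z,v)$. Finally, the equality of Kobayashi and Carath\'eodory distances between $z$ and $w$ follows formally: the retract $f(\triangle)\cong\triangle$ forces $c_{\Omega_r}(z,w)\ge c_\triangle(\widetilde\rho(z),\widetilde\rho(w))=k_\triangle(\widetilde\rho(z),\widetilde\rho(w))=k_{\Omega_r}(z,w)\ge c_{\Omega_r}(z,w)$, hence equality, exactly as in Theorem \ref{thm1}b). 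The main obstacle throughout is the quantitative claim that extremal/geodesic discs for near-boundary almost-tangential data remain trapped in the prescribed small neighborhood of the strongly pseudoconvex point $p$; everything else is either Lempert theory applied to $\widetilde D$ or soft functorial reasoning about retracts and invariant distances.
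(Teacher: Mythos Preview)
Your proposal has a genuine gap at the heart of both steps two and three: the auxiliary strongly pseudoconvex domain $\widetilde D$ you build is \emph{local}, so neither inclusion $\Omega_r\subset\widetilde D$ nor $\widetilde D\subset\Omega_r$ holds globally, and without $\Omega_r\subset\widetilde D$ the transfer you want fails. Concretely: if $f$ is a complex geodesic of $\widetilde D$ whose image happens to lie in $D\cap\Omega_r$, then $f$ is merely a holomorphic disc in $\Omega_r$, giving only $d_K^{\Omega_r}(f(\zeta),f(\eta))\le d_P(\zeta,\eta)$. The reverse inequality (hence the geodesic property in $\Omega_r$) would require a holomorphic left inverse $\Omega_r\to\triangle$, i.e.\ exactly the global retraction you are trying to construct. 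Your suggested fix, extending the Lempert left inverse $\widetilde\rho:\widetilde D\to\triangle$ holomorphically to all of $\Omega_r$ with values still in $\triangle$, is not something an $L^2/\overline\partial$ argument can produce: one can extend as a holomorphic \emph{function}, but there is no mechanism forcing the extension to map $\Omega_r$ into the unit disc. So both the ``$f$ is automatically a geodesic of $\Omega_r$'' claim and the retraction step are unsupported.

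The paper resolves this by a genuinely global device: Theorem~\ref{thm3} (exposing a strongly pseudoconvex boundary point of the Worm) yields a $C^k$ embedding $\phi:\overline{\Omega_r}\hookrightarrow\overline{\B^2}$, holomorphic on $\Omega_r$, with $\phi(p)\in b\B^2$. From this one produces (Theorem~\ref{Thm:immerso}) a bounded strongly \emph{convex} domain $W$ with $\phi(\Omega_r)\subset W$ and $\phi(\Omega_r)\cap U=W\cap U$ for some neighbourhood $U$ of $\phi(p)$. Now the inclusion goes the right way: an extremal disc of $W$ for almost-tangential data near $\phi(p)$ stays in $U$ by Huang's localization (Propositions~\ref{Huang} and~\ref{ext}), hence lies in $\phi(\Omega_r)$; since $\phi(\Omega_r)\subset W$ one has $g_W\le g_{\phi(\Omega_r)}$, so the same disc is extremal for $\phi(\Omega_r)$, and the Lempert retraction $W\to f(\triangle)$ restricts to $\phi(\Omega_r)$ automatically. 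The hard analytic content---and the reason the Worm case is not a corollary of Theorem~\ref{thm1}---is precisely establishing Theorem~\ref{thm3}, which requires a new splitting lemma (Theorem~\ref{thm:splitting}) because $\overline{\Omega_r}$ has no Stein neighbourhood basis. Your localization instinct is correct for the \emph{disc}, but the retraction and the metric comparison are irreducibly global, and that is what the exposing theorem supplies.
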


The corresponding corollary continues to hold.    The main new theorem   needed in order to prove these results for Worm-domains is the following:

\begin{theorem}\label{thm3}
Let $p\in b\Omega_r$ be a strongly pseudoconvex boundary point of a Worm-domain $\Omega_r$.
Then for any $k\in\mathbb N\cup \{\infty\}$ there exists a $C^k$-smooth embedding $\phi:\overline{\Omega}_r\rightarrow\overline{\mathbb B^2}$
such that 
\begin{itemize}
\item[(i)] $\phi:\Omega_r\rightarrow\mathbb B^2$ is holomorphic, and 
\item[(ii)] $\phi(p)\in b\mathbb B^2$.
\end{itemize}
\end{theorem}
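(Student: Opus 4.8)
The plan is to realize $\phi$ as a composition of two steps: first a global ``exposing'' map that moves a neighborhood of $p$ in $\overline{\Omega}_r$ to touch the boundary of a convex body, and then a biholomorphism of that convex body onto $\mathbb B^2$ that is smooth up to the closure. For the first step, recall that a Worm domain $\Omega_r$ admits a bounded defining function and, more importantly, that near the strongly pseudoconvex point $p$ the boundary $b\Omega_r$ is locally strictly convex after a quadratic change of coordinates; moreover $\Omega_r$ is contained in a slightly larger pseudoconvex (indeed $H^\infty$-convex) domain with which it shares the boundary point $p$. The idea, following the exposing-points technique of Forn\ae ss--Wold and Deng--Fornae ss--Wold, is to construct a holomorphic map $F$ on $\overline{\Omega}_r$, which is a small $C^k$ perturbation of the identity away from $p$, such that $F(\overline{\Omega}_r)$ is a relatively compact subset of a ball $\mathbb B^2$ except at the single point $F(p)$, which is pushed out to $b\mathbb B^2$; here one uses that $p$ is strongly pseudoconvex so that a local holomorphic support function (a local ``peak'' direction) exists at $p$, and one then globalizes it by a Runge-type / $L^2$-$\bar\partial$ approximation argument, controlling the $C^k$ norm on $b\Omega_r$.

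The key steps, in order, are: (i) straighten and locally convexify the boundary near $p$, producing local holomorphic coordinates in which $b\Omega_r$ near $p$ lies strictly on one side of its tangent hyperplane, touching it only at $p$; (ii) choose a target strictly convex domain $\Delta\Subset\mathbb C^2$ with smooth boundary containing $\overline{\Omega}_r$ (after the coordinate change) and with $p\in b\Delta$, arranged so that the only boundary contact of $\overline{\Omega}_r$ with $b\Delta$ is at $p$; (iii) construct the exposing map $F\colon \overline{\Omega}_r\to\overline{\Delta}$ with $F$ holomorphic on $\Omega_r$, $C^k$ up to the boundary, injective, $F(p)\in b\Delta$, and $F(\overline{\Omega}_r\setminus\{p\})\subset\Delta$ --- this is where one invokes the Forn\ae ss--Wold machinery for exposing boundary points of (not necessarily Stein-neighborhood-admitting) pseudoconvex domains, using that the obstruction in the Worm example (lack of Stein neighborhood basis) is a global phenomenon disjoint from the strongly pseudoconvex point $p$; (iv) post-compose with a fixed Riemann-type map $\Psi\colon \overline{\Delta}\to\overline{\mathbb B^2}$ which is biholomorphic $\Delta\to\mathbb B^2$ and $C^k$-smooth up to the boundary --- such a map exists because $\Delta$ is smoothly bounded and strictly (pseudo)convex, by Fefferman / Lempert-type regularity, and one can further arrange $\Psi$ to be $C^\infty$ if $\Delta$ is taken real-analytic near $\Psi^{-1}$ of the relevant point or simply by choosing $\Delta$ with the needed regularity. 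Setting $\phi=\Psi\circ F$ gives (i) and (ii).

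The main obstacle is step (iii): constructing the global holomorphic exposing map on $\overline{\Omega}_r$ with $C^k$ boundary regularity, given that $\Omega_r$ has no Stein neighborhood basis, so one cannot simply approximate by functions holomorphic in a neighborhood of $\overline{\Omega}_r$. The way around this is to work with the local structure at $p$: since $p$ is strongly pseudoconvex, there is a neighborhood $U$ of $p$ and a biholomorphism of $U\cap\overline{\Omega}_r$ onto a piece of a strictly convex domain, on which the exposing deformation is performed locally and explicitly (pushing $p$ out along the normal using a local peak function and a sharp local model map); then one must glue this local modification to the identity on the rest of $\overline{\Omega}_r$ while preserving holomorphy. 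This gluing is achieved by solving a $\bar\partial$-equation on $\Omega_r$ with a good estimate --- $\Omega_r$ is pseudoconvex, so Hörmander $L^2$-estimates apply, and the $C^k$-boundary control near $p$ follows from elliptic regularity / the strong pseudoconvexity of $b\Omega_r$ near $p$ (one only needs boundary regularity of the solution \emph{near} $p$, where $b\Omega_r$ is as nice as one likes). Keeping the correction small in $C^k(\overline{\Omega}_r\cap U)$ so that $F$ remains an embedding and still maps $\overline{\Omega}_r\setminus\{p\}$ strictly inside $\Delta$ is the delicate quantitative point, handled by scaling the local deformation to be supported in a tiny neighborhood of $p$.
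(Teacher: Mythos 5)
Your outline reproduces the skeleton of the paper's argument (local convexification at $p$, a local stretching map that pushes $p$ out to an exposed point, gluing to the identity on the rest of $\overline{\Omega}_r$), but the two places where the worm actually causes trouble are handled with claims that do not hold up. First, in step (iii) you propose a single additive $\overline\partial$-correction, solved by H\"ormander on $\Omega_r$ itself, and you assert that ``one only needs boundary regularity of the solution near $p$.'' This is the crux of the matter and it is wrong: the correction must be small in $C^k$ up to \emph{all} of $\overline{\Omega}_r$ --- otherwise $\phi$ is not a $C^k$ embedding of the closure, the glued map need not be injective, and one cannot guarantee $\phi(\overline{\Omega}_r\setminus\{p\})\subset\mathbb B^2$. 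H\"ormander's theorem on $\Omega_r$ gives only interior estimates, and on worm domains global boundary regularity for $\overline\partial$ genuinely fails (Christ); the lack of a Stein neighbourhood basis lives along $Z_r=b\Omega_r\cap\{w=0\}$, far from $p$, and that is exactly where your correction is uncontrolled. The paper's proof exists precisely to fill this gap: it constructs a one-sided Stein semi-neighbourhood family $\Omega_{r,a,b,c}\supset\Omega_r$ touching $b\Omega_r$ only along $Z_r$ and at quadratic rate (Lemma \ref{snbhb}), solves $\overline\partial$ on a carefully shrinking sequence of such domains with weights $e^{-\psi}$, $\psi\sim|w|^{-1/k}$, blowing up along $Z_r$ so that $L^2$ bounds convert to sup- and $C^k$-bounds up to $Z_r$, and runs Forstneri\v{c}'s \emph{compositional} splitting iteration $\gamma\circ\alpha=\beta$ on this sequence (Theorem \ref{thm:splitting}); the resulting global map is holomorphic on $\Omega_r$ but only $C^k$, not holomorphic across $Z_r$, which is why the theorem is stated with $C^k$ regularity. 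None of this machinery is present, or replaceable by ``elliptic regularity near $p$,'' in your proposal.

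Second, step (iv) rests on a false premise: a smoothly bounded strictly convex domain $\Delta\subset\mathbb C^2$ is in general \emph{not} biholomorphic to $\mathbb B^2$; there is no Riemann mapping theorem in several variables, Fefferman's theorem only gives boundary regularity of biholomorphisms that already exist, and Lempert's theory does not produce one. The paper avoids the issue entirely by exposing directly into a ball: after the stretching, the image consists of a bounded set together with a thin finger ending at the strongly convex exposed point $(R,0)$, so a sufficiently large ball contains the image and is tangent to it at $(R,0)$, and a scaling and translation finish the proof. Your step (iv) should be replaced by this elementary observation.
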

The point $\phi(p)$ is said to be {\sl exposed}.   Theorem \ref{thm2} was proved in \cite{DiederichFornaessWold} in the case
of strongly pseudoconvex domains.   The relevant difference between that and the present case, is that $\overline\Omega_r$
does not have a strongly pseudoconvex neighbourhood basis, and so we do not have access to a key ingredient in \cite{DiederichFornaessWold}, which 
is the existence of a certain compositional splitting of injective holomorphic maps, the proof of which relied on certain solution operators for
the $\overline\partial$-equation (see e.g. \cite{Forstnericbook}, 8.7).   Instead, we will prove the existence of such a splitting 
using H\"{o}rmander's $L^2$-theory, see Theorem \ref{thm:splitting} below.  \

A consequence of Theorem \ref{thm2} and the work \cite{DGZ2} is the following. 
\begin{theorem}\label{thm4}
Let $p\in b\Omega_r$ be a strongly pseudoconvex boundary point.  Then 
\begin{equation}
\lim_{z\rightarrow p}S_{\Omega_r}(z) = 1, 
\end{equation}
where $S_{\Omega_r}(z)$ denotes the squeezing function on $\Omega_r$.
\end{theorem}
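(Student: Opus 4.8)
The plan is to combine the global exposition result of Theorem~\ref{thm3} with the characterization of the squeezing function near an exposed boundary point obtained in \cite{DGZ2}. Recall that the squeezing function $S_{\Omega_r}(z)$ is defined by taking, over all injective holomorphic maps $f:\Omega_r\to\mathbb B^n$ with $f(z)=0$, the supremum of the radii $r$ such that $B(0,r)\subset f(\Omega_r)$. The key input from \cite{DGZ2} is that if a domain admits, near a boundary point $p$, a holomorphic map to the ball which extends smoothly to the closure and exposes $p$ (i.e.\ sends $p$ to $b\mathbb B^n$ while keeping the rest of $\overline{\Omega}_r$ inside $\mathbb B^n$), then $S_{\Omega_r}(z)\to 1$ as $z\to p$; indeed, the general principle there is that the squeezing function tends to $1$ at boundary points that can be exposed, because after exposing one can rescale and compose with automorphisms of the ball to make the image fill out almost all of $\mathbb B^n$ near the image of $p$.

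First I would invoke Theorem~\ref{thm3} with, say, $k=1$ (or any finite $k$), obtaining a $C^k$-smooth embedding $\phi:\overline{\Omega}_r\to\overline{\mathbb B^2}$ that is holomorphic on $\Omega_r$ and satisfies $\phi(p)\in b\mathbb B^2$. This realizes $\Omega_r$ as a domain in $\mathbb B^2$ with a globally exposed boundary point $q:=\phi(p)$, and $\phi$ is biholomorphic onto its image $\phi(\Omega_r)$, so the squeezing function is unchanged: $S_{\Omega_r}(z)=S_{\phi(\Omega_r)}(\phi(z))$ for all $z\in\Omega_r$, and $\phi(z)\to q$ as $z\to p$. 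Thus it suffices to show $S_{\phi(\Omega_r)}(w)\to 1$ as $w\to q$, where now $q$ is an exposed boundary point of a bounded domain contained in $\mathbb B^2$ and touching $b\mathbb B^2$ at $q$. This is precisely the situation handled by the methods of \cite{DGZ2}: near an exposed point the domain looks, after a Cayley-type transformation sending $q$ to infinity, like a domain squeezed between the exterior of a ball and a large half-space, and one constructs explicit squeezing maps (dilations composed with automorphisms of $\mathbb B^2$ fixing $q$) whose associated radii converge to $1$ along any sequence $w\to q$.

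The main obstacle — or rather the main thing to verify carefully — is that the hypotheses of the relevant theorem in \cite{DGZ2} are genuinely met, in particular that $\phi$ extends $C^1$ up to $\overline{\Omega}_r$ with nonvanishing differential at $p$ (which is part of the statement "$C^k$-smooth embedding" in Theorem~\ref{thm3}), and that $\phi(\overline{\Omega}_r)\cap b\mathbb B^2=\{q\}$, so that $q$ is \emph{strictly} exposed and the boundary of $\phi(\Omega_r)$ near $q$ is a $C^1$ hypersurface meeting $b\mathbb B^2$ transversally only at $q$. Granting this, the estimate $\liminf_{w\to q}S_{\phi(\Omega_r)}(w)\ge 1$ follows from the explicit construction in \cite{DGZ2}, while $S\le 1$ is automatic from the definition; combining with $S_{\Omega_r}(z)=S_{\phi(\Omega_r)}(\phi(z))$ and letting $z\to p$ gives the claim. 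I would also remark that strong pseudoconvexity of $p$ is used only to apply Theorem~\ref{thm3} and to guarantee the local boundary regularity that makes the exposition argument run; no further hypothesis on $\Omega_r$ is needed.
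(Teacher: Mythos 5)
Your proposal matches the paper's intended argument: Theorem \ref{thm4} is derived there precisely by combining the global exposing map of Theorem \ref{thm3} (the text's reference to Theorem \ref{thm2} at that point is evidently a slip) with the squeezing-function criterion of \cite{DGZ2}, using the biholomorphic invariance $S_{\Omega_r}(z)=S_{\phi(\Omega_r)}(\phi(z))$. The one point you correctly flag as needing verification, namely the strictness of the exposition $\phi(\overline{\Omega}_r\setminus\{p\})\subset\mathbb B^2$, is indeed supplied by the construction (see the first lines of the proof of Theorem \ref{Thm:immerso}).
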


The question whether any strongly pseudoconvex boundary point on a Worm domain can be exposed was 
raised in \cite{FornaessKim}.

\section{The Proof of Theorem \ref{thm1} and Theorem \ref{thm2}}

In this section we provide the proof of Theorem \ref{thm1} and Theorem \ref{thm2}. In order to do this, we need some preliminaries about real and complex geodesics.  \

For a domain $\Omega\subset\mathbb C^n$ and a piecewise $C^1$-smooth curve $\gamma:[0,1]\rightarrow\Omega$, we let $l_K(\gamma)$
denote the Kobayashi length of the image $\gamma$, \emph{i.e.}, 
\begin{equation}
l_K(\gamma):=\int_0^1 g_K(\gamma(t);\gamma'(t)) dt.
\end{equation}
For points $z,w\in\Omega$ we let $d_K(z,w)$ denote the induced distance between $z$ and $w$; the Kobayashi distance.   \

A {\sl real geodesic} (for the Kobayashi distance) is a piecewise $C^1$-smooth map $\gamma:[a,b]\to \Omega$ such that $d_K(\gamma(t),\gamma(s))=|t-s|$ for all $t,s\in [a,b]$. Here, $-\infty<a<b<+\infty$.  \

A {\sl complex geodesic} is a holomorphic map $\varphi:\triangle \to \Omega$ such that $d_P(\zeta, \eta)=d_K(\varphi(\zeta), \varphi(\eta))$ for all $\zeta, \eta\in \triangle$, where $d_P$ denotes the Poincar\'{e} distance.   \

That (1)-(4) are satisfied in Theorem \ref{thm1} is a consequence of the fact that boundary points of strongly pseudoconvex domains can be 
globally exposed (see Theorem \ref{Thm:immerso} below) and the following result due to 
X. Huang (see \cite[Corollary 1]{Hu}).

\begin{proposition}[Huang]\label{Huang}
Let $D\subset \C^n$ be a bounded strongly convex domain with $C^3$-smooth boundary. Let $p\in bD$ and let $U$ be an open neighborhood of $p$. Then there exist an open neighborhood $V$ of $p$ and $\epsilon_0>0$ such that for every $z\in V$ and for all $v\in \C^n\setminus\{0\}$ with $\|v_N\|< \epsilon_0 \|v_T\|$, the complex geodesic $\varphi:\triangle \to D$ such that $\varphi(0)=p$ and $\varphi'(0)=\lambda v$ for some $\lambda\in \C\setminus\{0\}$ satisfies $\varphi(\triangle)\subset U$. 
\end{proposition}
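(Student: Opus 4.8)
The plan is to argue by contradiction, combining a normal families argument with the boundary behavior of the Kobayashi metric at strongly pseudoconvex points and with Lempert's theory. First, ``$\varphi(0)=p$'' has to be read as ``$\varphi(0)=z$'': a non-constant complex geodesic $\varphi\colon\triangle\to D$ cannot have $\varphi(0)=p\in bD$, because composing with a complex-affine supporting functional $\ell$ of the convex domain $D$ at $p$ (normalized so that $\mathrm{Re}\,\ell<0$ on $D$ and $\ell(p)=0$) and applying the maximum principle forces $\ell\circ\varphi\equiv 0$, whence $\varphi\equiv p$ by strict convexity. So the claim is that for $z$ near $p$ and $v$ near $T^{\mathbb C}_{\pi(z)}bD$, the complex geodesic $\varphi_{z,v}$ with $\varphi_{z,v}(0)=z$ and $\varphi_{z,v}'(0)=\lambda v$, $\lambda>0$, has image in $U$. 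If not, there are $z_j\to p$ in $D$ and unit vectors $v_j$ with $\|(v_j)_N\|\le j^{-1}\|(v_j)_T\|$ such that $\varphi_j:=\varphi_{z_j,v_j}$ satisfies $\varphi_j(\zeta_j)\notin U$ for some $\zeta_j\in\triangle$; after passing to a subsequence, $(v_j)_T\to v_0$ (a unit complex tangent vector at $p$) and $(v_j)_N\to 0$.

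I would first observe that $\lambda_j:=\|\varphi_j'(0)\|\to 0$: since $\varphi_j$ is extremal, $\lambda_j=1/g_K(z_j,v_j)$, and the standard lower bound $g_K(z,v)\ge c\,\|v\|\,\delta(z)^{-1/2}$ near a strongly pseudoconvex boundary point gives $\lambda_j\le c^{-1}\delta(z_j)^{1/2}\to 0$. As $D$ is bounded, Montel's theorem lets us assume $\varphi_j\to\varphi$ locally uniformly on $\triangle$, with $\varphi\colon\triangle\to\overline D$ holomorphic and $\varphi(0)=\lim z_j=p$; the supporting-functional argument above then forces $\varphi\equiv p$. Thus $\varphi_j\to p$ locally uniformly on $\triangle$, and $\varphi_j'(0)\to 0$.

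The heart of the matter, and where near-tangentiality is really used, is upgrading this to uniform convergence $\varphi_j\to p$ on $\overline\triangle$; this would give $\varphi_j(\overline\triangle)\subset U$ for $j$ large, contradicting $\varphi_j(\zeta_j)\notin U$. (For a direction transversal to $bD$ the geodesic through $z_j$ does not contract --- in the limit it is a full ``diameter'' of $D$ --- so something genuinely using the smallness of $\|(v_j)_N\|/\|(v_j)_T\|$ is required here.) Since $D$ is convex, $\varphi_j(\triangle)$ lies in the convex hull of $\varphi_j(b\triangle)\subset bD$, so it suffices to show that the closed curves $\varphi_j(b\triangle)$ shrink to $p$. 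I would establish this by sandwiching $D$ near $p$ between its interior and exterior osculating balls at $p$, using the localization of the Kobayashi distance at strongly pseudoconvex points together with the stability of geodesics in the ball (which is Gromov hyperbolic for $d_K$), and reducing to the explicit model: in $\mathbb B^n$, the complex geodesic through a point $w$ with $\pi(w)=q$, $\delta(w)=\delta$ and $\|v_N\|<\epsilon_0\|v_T\|$ is an affine slice contained in the Euclidean ball about $q$ of radius $C(\delta^{1/2}+\epsilon_0)$, which here tends to $0$ since $\delta(z_j)\to 0$ and $\|(v_j)_N\|/\|(v_j)_T\|\to 0$. Alternatively the uniform boundary convergence can be read off from Lempert's uniform $C^{k-2}$ estimates up to $b\triangle$ for the geodesics of a strongly convex $C^k$ domain \cite{Lem1,Lem2}, or from an anisotropic dilation argument at $p$. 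The main obstacle is precisely this last step: transferring control from the \emph{global} geodesics of $D$, up to $b\triangle$, to the explicit geodesics of the osculating balls, which is where the real work --- the localization of invariant metrics combined with the convexity of $D$ --- concentrates.
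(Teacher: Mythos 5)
First, a point of orientation: the paper does not prove this proposition at all --- it is quoted from Huang \cite[Corollary 1]{Hu} --- so the real comparison is with the paper's proof of the closely analogous two-point statement, Proposition \ref{ext}, and the machinery built for it (Propositions \ref{Prop:geo-out}, \ref{Prop:convegence-geo-point}, \ref{Prop:Huang}). Your reading of the statement (the normalization must be $\varphi(0)=z$, not $\varphi(0)=p$) is correct, and your set-up is sound: contradiction, Montel, the supporting-functional argument forcing the locally uniform limit to be the constant $p$, and the correct observation that near-tangentiality can only enter in the passage from locally uniform convergence to uniform convergence on $\overline\triangle$. But the step you yourself call ``the heart of the matter'' is genuinely missing, and none of the three completions you list works as stated. (i) The $C^1$-stability of geodesics up to $b\triangle$ (Lempert/Huang, i.e.\ Proposition \ref{Prop:Huang}) requires the locally uniform limit to be a \emph{nonconstant} complex geodesic; here the limit is the constant $p$, which is exactly the degenerate situation those stability results do not cover. (ii) A complex geodesic of $D$ is not a complex geodesic of an osculating ball containing $D$, so ``reducing to the explicit model in $\mathbb B^n$'' is not a reduction but is the whole problem; Gromov-hyperbolic stability (Balogh--Bonk) controls real geodesics as quasi-geodesics of the comparison domain but does not by itself bound the image of the full holomorphic disc up to $b\triangle$. (iii) The convex-hull remark only trades $\varphi_j(\triangle)$ for $\varphi_j(b\triangle)$, which is no easier to localize.

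The way the paper closes precisely this gap (in the proof of Proposition \ref{ext}) is a dichotomy on $s_j:=\sup_{\zeta\in\triangle}\delta(\varphi_j(\zeta))$, driven by Abate's estimates $d_K(z_0,z)\le C_1-\tfrac12\log\delta(z)$ and $d_K(z,w)\ge C_2-\tfrac12\log\delta(z)$ \cite{Aba}. These show (Proposition \ref{Prop:geo-out}) that any real geodesic of $D$ from a point near $p$ to a point outside $U$ must cross a fixed compact $K\subset D$; since $\varphi_j$ maps Poincar\'e geodesics of $\triangle$ to real geodesics of $D$, the assumption $\varphi_j(\zeta_j)\notin U$ with $\varphi_j(0)=z_j\to p$ forces $s_j\ge\mathrm{dist}(K,bD)>0$. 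One then reparameterizes so that $\varphi_j(0)$ realizes $s_j$, extracts a subsequence converging to a \emph{nonconstant} complex geodesic $\varphi$, applies Proposition \ref{Prop:Huang} to get $C^1$-convergence on $\overline\triangle$, and finds $\zeta_0\in b\triangle$ with $\varphi(\zeta_0)=p$ and $\varphi'(\zeta_0)=\lambda v_0$, $v_0\in T_p^{\C}bD$ --- contradicting the Hopf lemma. This is where the near-tangentiality hypothesis is actually spent, and it is the ingredient your sketch would need to import to become a proof.
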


(As a matter of notation, if $D$ is a bounded domain with $C^2$-smooth boundary and $p\in b D$, we have an orthogonal splitting $\C^n=T^\C_p b D\oplus N$, with $N$ a one dimensional complex space. If $v\in \C^n$ is a vector, we let $v_N$ be the projection of $v$ on $N$ and $v_T$ the projection of $v$ on $T^\C_p bD$.) \

To get (5) we need to extend Huang's result to complex geodesics connecting the two points - see Proposition \ref{ext} below.   Finally, to get (1)-(5) in Theorem \ref{thm2}, we 
need to extend results on exposing points to Worm domains --- this will be done in Section \ref{worm}. \

We proceed to give some intermediate results before we prove Proposition \ref{ext}, and then we will prove Theorem \ref{thm1} and Theorem \ref{thm2}.   \

\begin{proposition}\label{Prop:geo-out}
Let $\Omega$ be a bounded strongly pseudoconvex domain with $C^2$-smooth boundary. Let $p\in b\Omega$ and let $U$ be an open neighborhood of $p$. Then there exist an open neighborhood $V$ of $p$ and a compact set $K\subset \Omega$ such that for every real geodesic $\gamma:[a,b] \to \Omega$ with $\gamma(a)\in V$ and $\gamma(b)\not\in U$, we have that $\gamma([a,b])\cap K\neq \emptyset$. 
\end{proposition}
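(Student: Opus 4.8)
The plan is to use a localization-type estimate for the Kobayashi metric near a strongly pseudoconvex boundary point, together with the fact that the Kobayashi distance to a fixed interior compact set blows up logarithmically as one approaches the boundary. First I would fix a smaller neighborhood $U' \Subset U$ of $p$, and choose $V \Subset U'$ an even smaller neighborhood; the compact set will be taken of the form $K = \overline{\Omega} \cap (\overline{U'} \setminus U'')$ intersected with a suitable collar, where $U'' \Subset U'$ contains $V$ — intuitively, $K$ is a ``barrier shell'' inside $U'$ separating $V$ from $bU'$. The key point is that a curve starting in $V$ and ending outside $U$ must either pass through this shell $K$, in which case we are done, or else escape the region $U'$ through $bU' \cap \Omega$ while staying close to the boundary $b\Omega$ the whole time; I must rule out the latter by a length estimate.

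The heart of the argument is the following lower bound: there is a constant $c>0$ and a neighborhood basis such that for $z \in \Omega$ close to $p$, one has $g_\Omega(z;v) \ge g_{\Omega \cap B}(z;v) \cdot (1 - \eta(z))$ is false in general, so instead I would invoke the standard estimate that on a strongly pseudoconvex domain, $g_\Omega(z;v) \ge \tfrac{C \|v\|}{\sqrt{\delta(z)}}$ in the complex-normal direction and $g_\Omega(z;v) \ge \tfrac{C\|v\|}{\delta(z)}$ is too strong — the correct bookkeeping is $g_\Omega(z;v)^2 \gtrsim \tfrac{|\langle v, \nu(z)\rangle|^2}{\delta(z)^2} + \tfrac{\|v\|^2}{\delta(z)}$ (Graham's estimates). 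A curve $\gamma$ from $V$ to $bU' \cap \Omega$ that avoids $K$ must travel a definite Euclidean distance $\rho>0$ (the width of the shell) while $\delta(\gamma(t))$ stays bounded below, hence stays $\geq \delta_0 > 0$ — wait, that is automatic since avoiding $K$ forces staying near the boundary only if $K$ is chosen to also contain a piece bounded away from $bU'$. The cleaner route: choose $K$ so that $\Omega \cap U'$ minus a neighborhood of $bU'$ is contained in $K \cup (\text{points with } \delta < \delta_0)$; then a $K$-avoiding curve from $V$ either exits through $bU'$ at bounded depth — impossible if it must also have been deep, contradiction via the shell geometry — or it exits through the region $\delta < \delta_0$, i.e. hugging $b\Omega$, and there I apply the metric lower bound: its Kobayashi length is at least $\int \tfrac{C\|\gamma'\|}{\sqrt{\delta}}\,dt \ge \tfrac{C}{\sqrt{\delta_0}} \cdot (\text{Euclidean length}) \ge \tfrac{C\rho}{\sqrt{\delta_0}}$, which by taking $V$ small (forcing $\gamma(a)$ close to $p$, hence $\gamma(a)$ far in Kobayashi distance from $bU' \cap \Omega$ is the wrong direction)—

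Let me state the mechanism correctly: a real geodesic has $d_K(\gamma(a),\gamma(b)) = b-a = l_K(\gamma)$, so $l_K(\gamma)$ equals the Kobayashi distance between its endpoints. If $\gamma$ avoids $K$ and $\gamma(b) \notin U$, then $\gamma$ must cross the shell region, but since it avoids $K$ the crossing happens in the thin layer $\{\delta < \delta_0\}$; on that layer the lower bound $g_\Omega(z;v) \ge c\,\|v\|/\sqrt{\delta(z)} \ge c\,\|v\|/\sqrt{\delta_0}$ holds, giving $l_K(\gamma) \ge (c/\sqrt{\delta_0})\,\rho$ for the portion inside $U' \setminus U''$. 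But this is a \emph{lower} bound, not a contradiction by itself. The actual contradiction: I instead use that $\gamma$ restricted to the sub-arc from $V$ until first exit from $U'$ is itself a geodesic of length $\leq b - a$, and its endpoints are $\gamma(a) \in V$ (arbitrarily close to $p$) and a point on $bU' \cap \overline{\Omega}$. By continuity of $d_K$ on $\Omega$, and since both endpoints approach fixed boundary positions, the needed estimate is $d_K(\gamma(a), \gamma(t_{\mathrm{exit}})) \to \infty$ as $V \to \{p\}$? No. The resolution is the one used in \cite{DiederichFornaessWold}-type arguments: by the localization of the Kobayashi metric at strongly pseudoconvex points (Royden/Graham), for $V$ small enough the Kobayashi ball of radius $1$ centered at any $\gamma(a) \in V$ is contained in $U'$; then $d_K(\gamma(a),\gamma(b)) \ge 1$ forces... this still is not quite it. The honest plan: pick $K$ as a \emph{Kobayashi}-metric barrier — since $g_\Omega$ blows up near $b\Omega$ while remaining comparable to the Euclidean metric on compacta, choose $K$ compact with the property that any curve from $V$ leaving $U$ and avoiding $K$ has Kobayashi length $\ge L$ for $L$ as large as we like, by pushing it into the $\delta < \delta_0$ layer and using $g_\Omega \gtrsim \delta^{-1/2}$, where $\delta_0 \to 0$ as the barrier is thickened; simultaneously, again by localization, a real geodesic starting in a sufficiently small $V$ and staying in $U'$ for its whole length cannot have length exceeding the Kobayashi diameter — no. I expect the \textbf{main obstacle} to be exactly this calibration: matching the geometric ``width'' of the avoided shell against a Kobayashi-length lower bound in a way that yields an outright contradiction, rather than merely a lower bound. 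I believe the correct and clean execution combines (a) Graham's boundary estimate $g_\Omega(z;v) \ge c\|v\|\,\delta(z)^{-1/2}$ near $p$, which makes curves near $b\Omega$ Kobayashi-long, with (b) the elementary fact that for $z,w$ in a fixed compact subset of $\Omega$, $d_K(z,w)$ is bounded, and (c) the observation that a real geodesic cannot re-enter a compact set after leaving it in a way that shortcuts — so that choosing $K$ to be a full ``topological shell'' inside $U'$ around $V$ forces the geodesic, if it avoids $K$, to have \emph{both} endpoints in the same component of $\Omega \setminus K$, contradicting $\gamma(a) \in V$ (inside the shell) and $\gamma(b) \notin U$ (outside), \emph{provided} $\Omega \setminus K$ has the right components — which is where strong pseudoconvexity and the local geometry of $b\Omega$ near $p$ are used to guarantee that such a separating compact shell exists inside $\Omega \cap U'$. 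That topological-separation step, made rigorous using that $b\Omega$ is a connected $C^2$ hypersurface locally and that $V$ can be nested strictly inside $U'$, is what I would write out carefully; the metric estimates then only ensure, in part (b), that the hypothesis "$\gamma$ is a geodesic of finite length" is not vacuous and controls where $\gamma(b)$ can be.
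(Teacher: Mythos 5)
You have correctly assembled one half of the argument — the lower bound: a curve that avoids a compact set $K$ and still travels a definite Euclidean distance must do so in the thin collar $\{\delta<\delta_0\}$, where $g_\Omega(z;v)\gtrsim \|v\|\delta(z)^{-1/2}$ makes its Kobayashi length large (and in fact arbitrarily large as $\delta_0\to 0$). But you explicitly fail to find the matching upper bound that turns this into a contradiction, and the fallback you settle on — a ``topological shell'' $K$ separating $V$ from $\Omega\setminus U$ — cannot work: $K$ is a \emph{compact subset of the open domain} $\Omega$, so the collar $\{z\in\Omega:\delta(z)<\epsilon\}$ is disjoint from $K$ for small $\epsilon$ and (since $b\Omega$ is connected) connects $V\cap\Omega$ to $\Omega\setminus U$. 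No compact subset of $\Omega$ separates a boundary neighborhood from the rest of the domain; the proposition is metric, not topological, and the detour through the collar must be excluded by a length comparison, not by connectivity.

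The calibration you were looking for is the cancellation of the $\log\delta$ terms between two standard estimates. Arguing by contradiction with geodesics $\gamma_j$ from $z_j\to p$ to $w_j\to q\in b\Omega\setminus U$ that avoid $K_j=\{\delta\geq 1/j\}$, one has on one hand the \emph{upper} bound $l_K(\gamma_j)=d_K(z_j,w_j)\leq d_K(z_j,z_0)+d_K(z_0,w_j)\leq 2C_1-\tfrac12\log\delta(z_j)-\tfrac12\log\delta(w_j)$ via a fixed interior point $z_0$ (\cite[Thm.~2.3.51]{Aba}); on the other hand the \emph{lower} bound $l_K(\gamma_j)\geq 2C_2-\tfrac12\log\delta(z_j)-\tfrac12\log\delta(w_j)+l_K(\tilde\gamma_j)$, where the first two terms are the cost of escaping small balls around $p$ and $q$ (\cite[Thm.~2.3.54]{Aba}) and $\tilde\gamma_j$ is the middle portion, which has Euclidean length bounded below and lies in $\Omega\setminus K_j$, so $l_K(\tilde\gamma_j)\to\infty$. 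The divergent $\log\delta$ terms are identical on both sides and cancel, leaving $2C_2+l_K(\tilde\gamma_j)\leq 2C_1$, which fails for large $j$. Without this pair of matched two-sided estimates your argument does not close, so as written the proposal has a genuine gap.
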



\begin{proof} 
For $j\in\mathbb N$ we set $K_j:=\{z\in\Omega:\mathrm{dist}(z,b\Omega)\geq 1/j\}$.  Assume to get a contradiction that there exist
sequences of points $z_j\rightarrow p, w_j\in\Omega\setminus U$, and geodesics $\gamma_j$ connecting $z_j$ and $w_j$ with $\gamma_j\in\Omega\setminus K_j$.
Without loss of generality we may assume that $w_j\rightarrow q\in b\Omega\setminus U$.   Fix a point $z_0\in\Omega$.   By  \cite[Thm. 2.3.51]{Aba}) there exists 
a constant $C_1>0$ such that $d_K(z_0,z)\leq C_1 - \frac{1}{2}\log\delta(z)$ for $z\in\Omega$.
Set $\delta:=\mathrm{dist}(p,q)/6$.  By \cite[Thm. 2.3.54]{Aba}, 
there exist a constant $C_2\in\mathbb R$ and $0<\epsilon_1<\epsilon_2<\delta$ such that if $\mathrm{dist}(z,p)<\epsilon_1$ (resp. $q$) and if $\mathrm{dist}(w,p)\geq 2\epsilon_2$ (resp. $q$) then $d_K(z,w)\geq C_2- \frac{1}{2}\log\delta(z)$.   \

For each $j$ set $\tilde\gamma_j:=\gamma_j\setminus (B_{2\delta}(p)\cup B_{2\delta}(q))$.  For large enough $j$ we get that 
\begin{equation}
l_K(\gamma_j) \geq 2C_2 - \frac{1}{2}\log(\delta(z_j)) - \frac{1}{2}\log\delta(w_j) + l_K(\tilde\gamma_j),
\end{equation}
and 
\begin{equation}
\mathrm{dist}(z_j,z_0) + \mathrm{dist}(w_j,z_0)\leq 2C_1 - \frac{1}{2}\log(\delta(z_j)) - \frac{1}{2}\log\delta(w_j).
\end{equation}
Since the Euclidean length of $\tilde\gamma_j$ is longer than $\mathrm{dist}(p,q)/6$ for large $j$, and since $\tilde\gamma_j\subset\Omega\setminus K_j$ we 
have that $l_K(\tilde\gamma_j)\rightarrow\infty$ as $j\rightarrow\infty$.  So for large enough $j$ we have that $2C_2 + l_K(\tilde\gamma_j)\geq 2C_1$, contradicting 
the assumption that $\gamma_j$ is a geodesic.  
\end{proof}

 \begin{proposition}\label{Prop:convegence-geo-point}
Let $D\subset \C^n$ be a bounded strongly convex domain with $C^3$-smooth boundary. Let $\{\varphi_n\}$ be a sequence of complex geodesics parameterized so that $\delta(\varphi_n(0))=\sup_{\zeta\in \triangle} \delta(\varphi_n(\zeta))$. If there exists $p\in \partial D$ such that $\lim_{n\to \infty}\varphi_n(0)=p$, then  $\{\varphi_n\}$ converges uniformly on $\overline{\triangle}$ to the constant map $\overline{\triangle}\ni \zeta\mapsto p$.
\end{proposition}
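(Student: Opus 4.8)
The plan is to argue by a normal families / contradiction argument, exploiting the normalization $\delta(\varphi_n(0))=\sup_{\zeta}\delta(\varphi_n(\zeta))$ together with the completeness of the Kobayashi distance near the boundary of a bounded strongly convex domain. First I would fix a relatively compact subdomain, or simply recall that $D$ is bounded, so that $\{\varphi_n\}$ is a normal family; passing to a subsequence, $\varphi_n\to\varphi$ uniformly on compact subsets of $\triangle$, where $\varphi:\triangle\to\overline D$ is holomorphic. Since $\varphi_n(0)\to p\in bD$, the limit satisfies $\varphi(0)=p\in bD$. The maximum principle applied to the function $\zeta\mapsto -\delta$-type plurisubharmonic defining function (more precisely: a bounded strongly convex domain admits a strictly plurisubharmonic bounded exhaustion, and $-\delta$ is subharmonic-like near the boundary; alternatively use that any holomorphic disc whose image meets $bD$ at an interior parameter value must be constant by the strong convexity / strict pseudoconvexity of $bD$) forces $\varphi\equiv p$. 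Hence every subsequential limit is the constant map $p$, so $\varphi_n\to p$ locally uniformly on $\triangle$.

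The real content is upgrading locally uniform convergence on $\triangle$ to uniform convergence on $\overline{\triangle}$. Here is where the normalization is essential: I would show that $\sup_{\zeta\in\triangle}\delta(\varphi_n(\zeta))=\delta(\varphi_n(0))\to 0$, which already says the images $\varphi_n(\triangle)$ shrink toward $bD$ uniformly, but not yet that they cluster only at $p$. To control the boundary behaviour, recall that $\varphi_n$ is an isometry from $(\triangle,d_P)$ onto its image in $(D,d_K)$. Fix $\zeta\in\triangle$; then $d_K(\varphi_n(0),\varphi_n(\zeta))=d_P(0,\zeta)$ is a fixed finite number, uniformly bounded for $\zeta$ in a fixed compact subset of $\triangle$. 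Using the standard boundary estimate $d_K(z_0,z)\ge C-\tfrac12\log\delta(z)$ (cited already from \cite{Aba} in the previous proof) together with an estimate that separates distinct boundary points — namely that points near two different boundary points are at Kobayashi distance bounded below by $C'-\tfrac12\log\delta$ as in \cite[Thm. 2.3.54]{Aba} — I would deduce that $\varphi_n(\zeta)$ cannot approach any boundary point other than $p$ as long as $\zeta$ stays in a fixed compact set. Then a diagonal/exhaustion argument over an increasing sequence of compact subsets of $\triangle$ combined with a uniform modulus-of-continuity bound gives uniform convergence on all of $\overline{\triangle}$.

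For that last uniform estimate up to the boundary, I would invoke the known boundary regularity of complex geodesics on bounded strongly convex $C^3$ domains (Lempert's theory, as quoted in (1)–(5) in the introduction, and Huang's refinements): each $\varphi_n$ extends continuously — indeed in a Hölder class with exponent $1/2$ — to $\overline\triangle$ with a modulus of continuity that can be taken independent of $n$, because the relevant Lempert/Huang constants depend only on $D$ and not on the particular geodesic. Equicontinuity on $\overline\triangle$ plus locally uniform convergence to the constant $p$ then yields uniform convergence on $\overline\triangle$ to $p$ by the Arzelà–Ascoli theorem. The main obstacle I anticipate is precisely the uniformity of this boundary modulus of continuity: one must be careful that the extremal discs $\varphi_n$, whose initial points $\varphi_n(0)$ degenerate to the boundary, still satisfy a uniform Hölder bound; this should follow from the scaling/localization estimates underlying Huang's Proposition \ref{Huang} (or from the a priori estimates in Lempert's work), but it requires checking that the constants there are locally uniform near $p$ rather than depending on a compact subset of $D$. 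Once that uniformity is in hand, the rest is the soft normal-families argument sketched above.
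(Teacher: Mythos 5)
There is a genuine gap, and it sits exactly where you flagged it: the claimed uniform modulus of continuity on $\overline{\triangle}$ for the whole family $\{\varphi_n\}$, ``with constants depending only on $D$ and not on the particular geodesic,'' is not available and is in fact false in the regime of this proposition. The Lempert/Huang boundary estimates for complex geodesics are uniform only over geodesics whose base point $\varphi(0)$ stays in a fixed compact subset of $D$; the constants degenerate as $\delta(\varphi(0))\to 0$, which is precisely the situation here. A concrete counterexample to the uniform H\"older bound: in $\mathbb B^2$ take $\varphi_n(\zeta)=\bigl(\tfrac{\zeta+a_n}{1+a_n\zeta},0\bigr)$ with $a_n\uparrow 1$. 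These are complex geodesics with $\varphi_n(0)\to(1,0)\in b\mathbb B^2$, yet $\varphi_n(-1)=(-1,0)$ for all $n$ and $\|\varphi_n'(-1)\|\to\infty$, so the family is not equicontinuous at $-1$ and does not converge to a constant. Of course these $\varphi_n$ violate the normalization $\delta(\varphi_n(0))=\sup_\zeta\delta(\varphi_n(\zeta))$ --- which shows that the normalization must enter the proof at the boundary step, whereas your Arzel\`a--Ascoli argument never uses it there. So the missing idea is not a refinement of the H\"older constants; it is a different mechanism for exploiting the normalization near $b\triangle$.

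The paper's proof supplies that mechanism via Proposition \ref{Prop:geo-out}: if $\varphi_n(\triangle)\not\subset U$ for a fixed neighborhood $U$ of $p$, pick $\zeta_n$ with $\varphi_n(\zeta_n)\notin U$ and consider the real Kobayashi geodesic $t\mapsto\varphi_n(\tanh t)$ (after a rotation putting $\zeta_n\in(0,1)$); it starts at $\varphi_n(0)\in V$ and exits $U$, hence must meet a fixed compact set $K\subset D$. Therefore $\delta(\varphi_n(0))=\sup_\zeta\delta(\varphi_n(\zeta))\geq\operatorname{dist}(K,bD)>0$, contradicting $\varphi_n(0)\to p$. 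This bypasses any boundary regularity of the $\varphi_n$ entirely. Your first step (normal families, supporting hyperplane/maximum principle, locally uniform convergence to the constant $p$) is fine but is also not needed in the paper's argument; the second step as written cannot be completed along the route you propose.
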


\begin{proof}
It is enough to show that for every neighborhood $U$ of $p$ there exists $n_0\in \N$ such that $\varphi_n(\overline{\triangle})\subset U$ for all $n\geq n_0$. 
Assume to get a contradiction that this is not the case.  Then without loss of generality, we may assume that 
there exists an open neighborhood $U$ of $p$ such that $\varphi_{n}(\triangle)\not\subset U$ for all $n$.  \

Let $V$ and $K$ be given by Proposition \ref{Prop:geo-out}. For $n$ large enough, $\varphi_{n}(0)\in V$ and there exists $\zeta_n\in \triangle$ such that $\varphi_{n}(\zeta_n)\not\in U$. By pre-composing $\varphi_{n}$ with a rotation, we can assume that $\zeta_n\in (0,1)$.  The curve $\gamma_n:[0,\frac{1}{2}\log \frac{1+\zeta_n}{1-\zeta_n}]\ni t\mapsto \varphi_{n}(\tanh (t))$ is a real geodesic such that $\gamma_n(0)\in V$ and $\gamma_n(\frac{1}{2}\log \frac{1+\zeta_n}{1-\zeta_n})=\varphi_{n}(\zeta_n)\not\in U$. Hence, by Proposition \ref{Prop:geo-out}, we have that $\gamma_n([0,\frac{1}{2}\log \frac{1+\zeta_n}{1-\zeta_n}])\cap K\neq \emptyset$ for all $k$. This implies that there exists a constant $C>0$ such that  for all $n$ large enough we have that 
\[
\delta(\varphi_{n}(0))=\sup_{\zeta\in \triangle}\delta(\varphi_{n}(\zeta))\geq C,
\]
against the hypothesis $\lim_{n\to \infty}\varphi_{n}(0)=p$.
\end{proof}

The next proposition follows from  \cite[Proposition 1]{Hu} (see also \cite[Section 2]{BPT}):

\begin{proposition}[Huang]\label{Prop:Huang}
Let $D\subset \C^n$ be a bounded strongly convex domain with $C^3$-smooth boundary.  Let $\{\varphi_n\}$ be a sequence of complex geodesics in $D$ converging uniformly on compacta  to a complex geodesic $\varphi:\triangle \to D$. Then, $\{\varphi_n\}$ converges to $\varphi$ in the $C^1$-topology of $\overline{\triangle}$.
\end{proposition}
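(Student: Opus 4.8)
The plan is to combine Lempert's boundary regularity for complex geodesics with a uniform a priori H\"older estimate of Huang type, and then conclude by Arzel\`a--Ascoli together with the rigidity of complex geodesics.

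First I would record that, by Lempert's theory \cite{Lem1} (items (1)--(5) above), each $\varphi_n$, being a complex geodesic of the bounded strongly convex $C^3$-domain $D$, extends to a $C^1$-embedding of $\overline{\triangle}$ with $\varphi_n(b\triangle)\subset bD$; likewise for $\varphi$. Since $\varphi_n\to\varphi$ uniformly on compacta, the $1$-jets converge, $(\varphi_n(0),\varphi_n'(0))\to(\varphi(0),\varphi'(0))$, so $\varphi_n(0)$ stays in a fixed compact subset of $D$ and $\|\varphi_n'(0)\|$ is bounded away from $0$; in other words the family $\{\varphi_n\}$ is non-degenerate. The whole statement then reduces to a single uniform bound $\|\varphi_n\|_{C^{1,\alpha}(\overline{\triangle})}\le C$ for some $\alpha\in(0,1)$: granting it, $\{\varphi_n\}$ is precompact in $C^{1,\alpha'}(\overline{\triangle})$ for every $\alpha'<\alpha$, and any subsequential limit is holomorphic on $\triangle$, $C^1$ up to $b\triangle$, and agrees with $\varphi$ on $\triangle$ (hence on $\overline{\triangle}$) because of the uniform-on-compacta convergence; thus every subsequence has a further subsequence converging to $\varphi$ in $C^1(\overline{\triangle})$, which gives the claim.

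To obtain the uniform estimate --- which is exactly the content of \cite[Proposition 1]{Hu} (compare \cite[Section 2]{BPT}) --- I would use Lempert's description of complex geodesics as stationary discs. Writing $D=\{\rho<0\}$ with $\rho$ a $C^3$ strongly convex defining function, $\varphi_n$ being a complex geodesic is equivalent to the existence of a holomorphic ``dual map'' $\tilde\varphi_n:\triangle\to\C^n$, continuous and nowhere vanishing on $b\triangle$, whose boundary values equal, up to multiplication by a positive scalar function, $\bar\zeta$ times the complex gradient of $\rho$ along $\varphi_n$. The pair $(\varphi_n,\tilde\varphi_n)$ then solves a nonlinear Riemann--Hilbert--type boundary problem whose linearization along $\varphi_n$ has, by Lempert's curvature computation for strongly convex domains, all partial indices equal to $1$; consequently the problem carries Schauder a priori estimates with no cokernel obstruction, and these become uniform in $n$ once the boundary traces $\varphi_n|_{b\triangle}$ are uniformly bounded in some H\"older norm $C^{\beta}(b\triangle)$. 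That preliminary boundary-H\"older bound is itself a consequence of strong convexity --- a Hopf-lemma/curvature estimate applied to the negative subharmonic functions $\rho\circ\varphi_n$ --- and holds precisely because the family is non-degenerate; bootstrapping the Riemann--Hilbert estimate upgrades it to the desired uniform $C^{1,\alpha}(\overline{\triangle})$ bound.

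The main obstacle is exactly this uniform a priori estimate: making the Lempert--Huang Riemann--Hilbert / partial-indices analysis quantitative and uniform over the whole family, the delicate point being the uniform H\"older control up to $b\triangle$, which ultimately rests on the strong convexity of $D$ together with the non-degeneracy of $\{\varphi_n\}$ --- equivalently, on the hypothesis that the limit $\varphi$ really is a non-constant complex geodesic rather than a constant or a lower-dimensional object. Once that is in place, the passage to $C^1(\overline{\triangle})$ convergence via Arzel\`a--Ascoli and rigidity is routine.
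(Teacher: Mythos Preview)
The paper does not give its own proof of this proposition; it simply records that the statement ``follows from \cite[Proposition 1]{Hu} (see also \cite[Section 2]{BPT})'' and moves on. Your proposal is therefore not competing with any argument in the paper, but rather sketching what those cited references do.

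As a sketch of the Huang/Lempert argument your outline is essentially correct and matches the structure of \cite{Hu} and \cite[Section 2]{BPT}: the non-degeneracy of the family coming from $\varphi_n(0)\to\varphi(0)\in D$ and $\varphi_n'(0)\to\varphi'(0)\neq 0$, the stationary-disc description together with the Riemann--Hilbert boundary problem and the fact that the partial indices are all equal to $1$, the resulting uniform $C^{1,\alpha}(\overline{\triangle})$ a priori bound, and the Arzel\`a--Ascoli plus uniqueness endgame. The only place to be careful is the ``preliminary boundary-H\"older bound'': in the references this is obtained not just from a Hopf-lemma estimate on $\rho\circ\varphi_n$ alone, but from a more delicate argument (in \cite{Hu} via a localization and comparison with model discs, in \cite{BPT} via the pluricomplex Green/Poisson machinery) that genuinely uses the strong convexity and the $C^3$ regularity of $bD$; your description compresses this step. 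But as a plan, and as a faithful summary of what the paper is invoking by citation, it is accurate.
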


\begin{proposition}\label{ext}
Let $D\subset \C^n$ be a bounded strongly convex domain with $C^3$-smooth boundary.  Then for every $p\in bD$ and for every open neighborhood $U$ of $p$ there exist an open neighborhood $V\subset U$ of $p$ and $\epsilon>0$ such that for all $z,w\in V$ with $\|(z-w)_N\|<\epsilon \|(z-w)_T\|$  the complex geodesic $\varphi:\triangle \to D$ containing  $z$ and $w$ is contained in $U$.
\end{proposition}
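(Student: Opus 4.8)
The plan is to reduce the two-point statement to the one-point statement (Proposition~\ref{Huang}) by a compactness/contradiction argument, using Proposition~\ref{Prop:convegence-geo-point} and Proposition~\ref{Prop:Huang} to control the limit of a hypothetical bad sequence of geodesics. So suppose the conclusion fails: there is a neighborhood $U$ of $p$ such that for every neighborhood $V\subset U$ of $p$ and every $\epsilon>0$ one can find $z,w\in V$ with $\|(z-w)_N\|<\epsilon\|(z-w)_T\|$ whose connecting complex geodesic leaves $U$. Taking $V=B_{1/n}(p)$ and $\epsilon=1/n$ produces points $z_n,w_n\to p$ with $\|(z_n-w_n)_N\|<\tfrac1n\|(z_n-w_n)_T\|$ and complex geodesics $\varphi_n:\triangle\to D$ through $z_n,w_n$ with $\varphi_n(\triangle)\not\subset U$. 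Normalize each $\varphi_n$ so that $\delta(\varphi_n(0))=\sup_{\zeta\in\triangle}\delta(\varphi_n(\zeta))$.

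Next I would split into two cases according to whether $\delta(\varphi_n(0))\to 0$ along a subsequence. If it does, then since $\varphi_n(0)$ then accumulates at a boundary point and $z_n\to p$ forces (via the standard fact that the sup of $\delta$ along the geodesic controls the whole image near the boundary — exactly the mechanism in the proof of Proposition~\ref{Prop:convegence-geo-point}) that this accumulation point is $p$; Proposition~\ref{Prop:convegence-geo-point} then gives $\varphi_n\to p$ uniformly on $\overline\triangle$, contradicting $\varphi_n(\triangle)\not\subset U$ for large $n$. Actually it is cleaner to argue directly: if $\delta(\varphi_n(0))\not\to 0$ along the whole sequence, pass to a subsequence with $\delta(\varphi_n(0))\geq c>0$; then $\{\varphi_n\}$ is a normal family and, after a further subsequence, converges uniformly on compacta to a holomorphic map $\varphi:\triangle\to D$, which is a complex geodesic since it is a nonconstant limit of complex geodesics with $\delta(\varphi(0))\geq c$ (nonconstancy because the limit is an isometry for the relevant distances on a fixed compact, or because $\varphi(0)$ is interior while the geodesics must reach out of $U$). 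By Proposition~\ref{Prop:Huang} the convergence is in $C^1(\overline\triangle)$. Since $z_n,w_n\to p\in bD$ and both lie on $\varphi_n(\overline\triangle)$, the limit disc $\varphi(\overline\triangle)$ must touch $bD$ at $p$; as $\varphi$ is a complex geodesic of a strongly convex domain it extends smoothly and $\varphi(b\triangle)\subset bD$, and $p=\varphi(\sigma)$ for some $\sigma\in b\triangle$. After precomposing with an automorphism of $\triangle$ we may take $\sigma=1$, i.e. $\varphi(1)=p$.

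Now I extract the tangency information. Write $\zeta_n,\eta_n\in\triangle$ with $\varphi_n(\zeta_n)=z_n$, $\varphi_n(\eta_n)=w_n$; since $z_n,w_n\to p=\varphi(1)$ and the convergence is $C^1$ up to the boundary, $\zeta_n,\eta_n\to 1$. Then
\[
z_n-w_n=\varphi_n(\zeta_n)-\varphi_n(\eta_n)=(\zeta_n-\eta_n)\varphi_n'(1)+o(|\zeta_n-\eta_n|),
\]
uniformly in $n$ by $C^1$-convergence, so the direction of $z_n-w_n$ converges to the direction of $\varphi'(1)$. But $\varphi$ is a complex geodesic of $D$ with $\varphi(1)=p\in bD$, and the Hopf lemma / boundary behavior of Lempert discs forces $\varphi'(1)$ to have a nonzero normal component at $p$, i.e. $\|\varphi'(1)_N\|>0$; hence $\|(z_n-w_n)_N\|/\|(z_n-w_n)_T\|$ stays bounded below by a positive constant for large $n$, contradicting $\|(z_n-w_n)_N\|<\tfrac1n\|(z_n-w_n)_T\|$. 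This contradiction, together with the case $\delta(\varphi_n(0))\to 0$ handled above, proves that a $V$ and $\epsilon$ with the desired property exist; shrinking $V$ we may assume $V\subset U$.

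The main obstacle is the second case: getting a genuine \emph{complex geodesic} (not just a holomorphic disc, and not a constant) as the limit of $\varphi_n$, and then upgrading weak convergence to $C^1(\overline\triangle)$ so that the tangency hypothesis on $z_n-w_n$ transfers to the normal component of $\varphi'(1)$ at the boundary point $p$. For this one leans on Proposition~\ref{Prop:Huang} for the $C^1$-convergence and on Lempert's theory (the boundary regularity of extremal discs and the transversality $\varphi'(1)_N\neq 0$) to close the argument; ruling out a constant limit is exactly what the normalization $\delta(\varphi_n(0))=\sup_\zeta\delta(\varphi_n(\zeta))$ and Proposition~\ref{Prop:convegence-geo-point} are designed to handle.
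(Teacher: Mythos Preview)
Your proposal is correct and follows essentially the same approach as the paper: argue by contradiction, normalize the geodesics so that $\delta(\varphi_n(0))=\sup_\zeta\delta(\varphi_n(\zeta))$, use Proposition~\ref{Prop:convegence-geo-point} to ensure the limit is a genuine complex geodesic, upgrade to $C^1(\overline\triangle)$-convergence via Proposition~\ref{Prop:Huang}, and finish with the Hopf lemma at the boundary point $p$. The only difference is cosmetic: you split into the two cases $\delta(\varphi_n(0))\to 0$ and $\delta(\varphi_n(0))\geq c>0$, whereas the paper treats them together by observing directly from Proposition~\ref{Prop:convegence-geo-point} (and the fact that $z_n\to p$ while $\varphi_n(\triangle)\not\subset U$) that the limit map must take values in $D$, hence is a complex geodesic---your case split is valid but unnecessary.
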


\begin{proof}
Assume to get a contradiction the result is not true. Then, there exist an open neighborhood $U$ of $p$ and two sequences $\{z_n\}, \{w_n\}\subset D$ converging to $p$ with $\lim_{n\to \infty}\frac{z_n-w_n}{\|z_n-w_n\|}=v$ for some $v\in T^\C_p bD$, such that for every $n\in \N$,  the complex geodesic $\varphi_n:\triangle \to D$ which contains $z_n$ and $w_n$ satisfies $\varphi_n(\triangle)\not\subset U$. We can assume that $\varphi_n$ is parameterized in such a way that $\delta(\varphi_n(0))=\max_{\zeta\in \triangle}\delta(\varphi_n(\zeta))$ for all $n$. Up to subsequences, we can also assume that $\{\varphi_n\}$ converges uniformly on compacta to some holomorphic map $\varphi:\triangle \to \overline{D}$. By Proposition \ref{Prop:convegence-geo-point}, $\varphi(\triangle)\subset D$ and hence, since $\lim_{n\to \infty}d_K(\varphi_n(\zeta), \varphi_n(\eta))=d_K(\varphi(\zeta), \varphi(\eta))$ for all $\zeta, \eta\in \D$, it follows that $\varphi$ is a complex geodesic. By Proposition \ref{Prop:Huang}, $\{\varphi_n\}$ converges uniformly to $\varphi$ in $C^1$-norm on $\overline{\triangle}$ and $\{\varphi'_n\}$ converges uniformly on $\overline{\triangle}$ to $\varphi'$.

Since $\{z_n\}$ and $\{w_n\}$ are converging to $p$, it follows that there exists $\zeta\in \partial \triangle$ such that $\varphi(\zeta)=p$. Moreover, since $\lim_{n\to \infty}\frac{z_n-w_n}{\|z_n-w_n\|}=v$, then $\varphi'(\zeta)=\lambda v$ for some $\lambda\in \C$. However, since $v\in T^\C_p bD$, this contradicts the Hopf Lemma. 
\end{proof}

In order to prove Theorem \ref{thm1}, we need one more fact:

\begin{theorem}\label{Thm:immerso}
Let $D\subset\mathbb C^n$ be a bounded strongly pseudoconvex domain of class $C^k$, $k\geq 2$ and let $p\in bD$, or let $D$ be a Worm-domain, and $p\in bD$ a strictly pseudoconvex boundary point. Then there exists a bounded strongly convex domain $W$ with $C^k$-smooth boundary and a biholomorphism $\phi:D\to \C^n$ such that
\begin{enumerate}
\item $\phi$ extends as a diffeomorphism of class $C^k$ on $\overline{D}$,
\item $\phi(D)\subset W$,
\item there exists an open neighborhood $U$ of $\phi(p)$ such that $U\cap \phi(D)=U\cap W$.
\end{enumerate}
\end{theorem}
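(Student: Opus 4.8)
The statement is a \emph{global exposing} theorem, designed precisely so that Lempert's theory on strongly convex domains and Huang's refinements (Proposition~\ref{Huang}) may be transported to $D$ near $p$. I would reduce it to the following normalized form: there exist a holomorphic embedding $\phi$ of $D$ that extends to a $C^k$ diffeomorphism of $\overline D$ onto its image, a bounded strongly convex domain $W$ with $C^k$ boundary, and a ball $U$ about $\phi(p)$, such that $\phi(D)\subset W$, $\phi(p)\in bW$, and $\phi$ carries a neighborhood of $p$ in $\overline D$ diffeomorphically onto $\overline W\cap U$. Granting this: (1) holds by construction, (2) is the inclusion $\phi(D)\subset W$, and (3) follows since the last clause yields $U\cap\phi(D)=U\cap W$ (after shrinking $U$ if need be). The only local input is strong pseudoconvexity at $p$: a quadratic polynomial automorphism of $\C^n$ absorbs the holomorphic part of the second-order Taylor expansion of a defining function, so after applying it we may assume $p=0$ and $bD\cap B(0,r_0)$ a real strongly convex hypersurface; in the strongly pseudoconvex case we fix in addition, once and for all, a strongly pseudoconvex --- hence Stein --- neighborhood $\Omega\supset\overline D$, which exists because bounded strongly pseudoconvex domains have a strongly pseudoconvex neighborhood basis.

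\emph{The strongly pseudoconvex case.} This is essentially the content of \cite{DiederichFornaessWold}; I would only recall its mechanism. The embedding $\phi$ is obtained by composing injective holomorphic maps $\phi_1,\phi_2,\dots$, each defined on a neighborhood of $\overline D$ and uniformly $C^k$-close to the identity, that step by step push the distinguished boundary point (partially exposed at each stage), together with a small surrounding cap of $bD$, further out along a fixed $C^k$-smooth arc issuing transversally from $0$ into $\Omega\setminus\overline D$, until this cap is flattened onto a cap of the boundary of a bounded strongly convex domain, with the image of $D$ remaining on the convex side. Each $\phi_j$ is produced by writing down an explicit smooth (non-holomorphic) model map with the prescribed boundary behavior and then correcting it to a holomorphic map by solving a $\overline\partial$-equation on $\Omega$ with a bounded linear solution operator preserving $C^k$-regularity up to the boundary (see \cite{Forstnericbook}); smallness of the correction, together with the strong convexity of $bD$ near $0$, keeps every $\phi_j$ --- hence $\phi$ --- injective and a $C^k$ diffeomorphism up to $bD$. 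Finally, since $\phi(p)$ has been exposed and $b\phi(D)$ is strongly convex near $\phi(p)$, the resulting boundary germ extends to a bounded strongly convex domain $W$ with $C^k$ boundary that contains $\phi(D)$ and coincides with it near $\phi(p)$; this is the normalized form.

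\emph{The worm case.} Here the scheme above fails at its one non-formal step: $\overline\Omega_r$ has no Stein --- hence no strongly pseudoconvex --- neighborhood basis, so there is no $\overline\partial$-solution operator on a neighborhood of $\overline\Omega_r$ with which to holomorphically correct the model maps. This is exactly the obstruction overcome by Theorem~\ref{thm3}, whose proof supplies the needed compositional splitting of injective holomorphic maps through H\"{o}rmander's $L^2$-estimates (Theorem~\ref{thm:splitting}) in place of those solution operators. Granting Theorem~\ref{thm3} and applying it with the given $k$, we obtain a $C^k$ embedding $\phi\colon\overline\Omega_r\to\overline{\mathbb B^2}$, holomorphic on $\Omega_r$, with $\phi(p)\in b\mathbb B^2$; the construction there may be arranged, as is standard for exposing, to carry a whole boundary cap of $b\Omega_r$ about $p$ onto a cap of $b\mathbb B^2$, so that $\phi(\Omega_r)$ coincides with $\mathbb B^2$ on a neighborhood of $\phi(p)$ --- in particular $b\phi(\Omega_r)$ is strongly convex there. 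Since $\Omega_r\subset\C^2$ and $\mathbb B^2$ is a bounded strongly convex domain with real-analytic boundary, taking $W:=\mathbb B^2$ gives the normalized form, and (1)--(3) follow.

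The hard part is therefore entirely within the worm case, and it has already been isolated as Theorem~\ref{thm3}: the passage from $\overline\partial$-solution operators on a Stein neighborhood --- available for a strongly pseudoconvex $\overline D$ --- to an $L^2$-argument that still works even though $\overline\Omega_r$ has no Stein neighborhood basis. Everything else, namely the quadratic convexification at $p$, the bookkeeping turning ``$\phi(p)$ exposed'' into $U\cap\phi(D)=U\cap W$, and the propagation of injectivity and $C^k$-regularity up to the boundary, is routine and parallels the arguments of \cite{DiederichFornaessWold}.
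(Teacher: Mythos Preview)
Your overall structure is right: cite the exposing results (\cite{DiederichFornaessWold} for strongly pseudoconvex $D$, Theorem~\ref{thm3} for the worm) and then build $W$. The gap is in how you build $W$.

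Both exposing theorems, as stated and proved, give \emph{single-point} exposing: a $C^k$ embedding $\phi:\overline D\to\C^n$, holomorphic on $D$, with $\phi(D)\subset\B^n$, $\phi(p)\in b\B^n$, and $\phi(\overline D\setminus\{p\})\subset\B^n$. They do \emph{not} assert that a whole boundary cap of $bD$ near $p$ is carried onto a cap of $b\B^n$. Your claim that ``the construction there may be arranged, as is standard for exposing, to carry a whole boundary cap of $b\Omega_r$ about $p$ onto a cap of $b\mathbb B^2$'' is not justified by Theorem~\ref{thm3}, and in fact the explicit construction in Section~5 of the paper (a Riemann map in the $z$-variable composed with a splitting perturbation) exposes only the single point. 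Consequently, taking $W:=\B^2$ fails property~(3): in a neighbourhood $U$ of $\phi(p)$ one has $U\cap\phi(D)\subsetneq U\cap\B^2$, since $\phi(\overline D\setminus\{p\})$ lies strictly inside the ball. The analogous hand-wave in your strongly pseudoconvex paragraph (``the resulting boundary germ extends to a bounded strongly convex domain $W$\dots'') has the same defect: you have not said how to produce $W$ from a single exposed point.

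The paper's fix is a short but genuine argument you are missing. After the single-point exposing, one lets $\widehat D$ be the convex hull of $\phi(D)$. Because $b\phi(D)$ is strongly convex near $\phi(p)$ (the local convexification at $p$ survives the exposing map), there is a neighbourhood $U$ of $\phi(p)$ with $\widehat D\cap U=\phi(D)\cap U$. The signed distance function $\psi$ to $b\widehat D$ is convex everywhere and strictly convex near $\phi(p)$; one then perturbs and smooths $\psi$ away from $\phi(p)$ (cutoffs plus a Morse-theoretic regularization) to obtain a $C^\infty$ strictly convex defining function that still equals $\psi$ near $\phi(p)$. The resulting sublevel set is the desired $W$: smooth, strongly convex, containing $\phi(D)$, and coinciding with $\phi(D)$ near $\phi(p)$. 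This step --- passing from an exposed point to an ambient strongly convex $W$ via the convex hull and smoothing --- is the one nontrivial ingredient beyond the exposing theorems, and it is absent from your proposal.
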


After applying \cite[Thm. 1.1]{DiederichFornaessWold} or Theorem \ref{thm3} below, this result follows from techniques in \cite{F}.  
For the convenience of the reader, we include here a complete proof.

\begin{proof}[Proof of Theorem \ref{Thm:immerso}]
By \cite[Thm. 1.1]{DiederichFornaessWold}  in case $D\subset\mathbb C^n$ is a bounded strongly pseudoconvex domain, 
or by Theorem \ref{thm3} in case  $D$ is a Worm-domain, there exists a $C^k$-smooth embedding embedding $\phi: \overline{D}\to \C^n$, holomorphic on $D$, 
$\phi(D)\subset \B^n:=\{z\in \C^n: \|z\|=1\}$, $\phi(p)\in b\B^n$ and $\phi(\overline{D}\setminus \{p\})\subset \B^n$.

Let $\widehat D$ be the convex hull of $\phi(D)$.  Then 
there exists an open neighbourhood $U$ of $\phi(p)$ such that $\widehat D\cap U=\phi(D)\cap U$.   Let $\psi$ be the signed distance function to $b\widehat D$.  Then 
$\psi$ is convex, and near the point $p$ it is strictly convex.  Let $\chi$ a non-negative smooth function, $\chi(x)=0$ near the 
origin, and $\chi=1$ near $\mathbb R^n\setminus\mathbb B^n$.  Then for small enough $\epsilon>0$ and small enough $\delta=\delta(\epsilon)>0$
the function $\tilde\psi=\psi - \delta\chi(x/\epsilon)$ is convex with $\tilde\psi=\psi$ near the origin,  $\tilde\psi<\psi$ for $\|x\|\geq\epsilon$, and 
$\tilde\psi$ is strictly convex for $\epsilon\leq\|x\|\leq 2\epsilon$.  Let $\tilde\chi$ be a non-negative smooth function which is $0$ near $\epsilon\overline{\mathbb B^n}$ and 
$1$ on $\mathbb R^n\setminus 2\epsilon\mathbb B^n$.  Smoothing, we may obtain a sequence of strictly convex functions $\tilde\psi_j$ converging to $\tilde\psi$ on a neighbourhood of $\overline{\{\tilde\psi<0\}}$, and the convergence is in $C^2$-norm on $\mathbb B^n_{2\epsilon}(0)$.  By Morse theory we may assume that $\nabla\tilde\psi_j$ is non-vanishing 
on $b{\{\tilde\psi_j<0\}}$.   So for sufficiently large $j$ we have that $\tilde\psi + \tilde\chi(\tilde\psi_j-\tilde\psi)$ defines a smoothly bounded strictly convex 
domain which agrees with $\phi(D)$ near $\phi(p)$.
\end{proof}

\begin{proof}[Proof of Theorem \ref{thm1} and Theorem \ref{thm2}:]
Let $\phi$ and $W$ be given by  Theorem \ref{Thm:immerso}.  The orthogonal splitting $\C^n=T^\C_pbD+N$ might not be preserved under $\phi$. Indeed, $d\phi_p(T^\C_pbD)=T^\C_{\phi(p)}b\phi(D)$ but $d\phi_p(N)$ might not be orthogonal to $T^\C_{\phi(p)}b\phi(D)$. Let $N'$ be the orthogonal complement of  $T^\C_{\phi(p)}b\phi(D)$ in $\C^n$. Then there exists a constant $C>0$ such that, if $v\in T_zD$ and $\|v_N\|<\epsilon \|v_T\|$ (in the splitting $\C^n=T^\C_pbD+N$), then  $\|{d\phi_z(v)}_N\|<C\epsilon \|d\phi_z(v)_T\|$ (in the splitting $\C^n=T^\C_{\phi(p)}b\phi(D)+N'$).

Therefore, without loss of generality, we can assume that there exists a bounded strongly convex domain $W$ with $C^3$ boundary such that $D\subset W$ and an open neighborhood $U$ of $p$ such that $D\cap U=W\cap U$. 

a) Let $z\in D$ and $v\in T_zD\setminus\{0\}$. Let $f_{z,v}$ be the extremal map for $W$ with respect to $(z,v)$. By Proposition \ref{Huang} there exist an open neighborhood $V$ of $p$ and $\epsilon_0>0$ such that $f_{z,v} (\triangle)\subset U$ provided $z\in V$ and $\|v_N\|<\epsilon_0 \|v_T\|$. Since $g_W(z,v)\leq g_D(z,v)$, it follows that $f_{z,v}$ is an extremal map for $D$ as well. It is also unique:  otherwise, since $D\subset W$, $W$ would have two different extremal maps with respect to $(z,v)$. 

Finally, according to Lempert's theory, every extremal disc of $W$ is a holomorphic retract of $W$, and, since $\phi(D)\subset W$ and $W\cap U=\phi(D)\cap U$, it follows that $f_{z,v}(\triangle)$ is a holomorphic retract of $\phi(D)$ as well. 
From this it is easy to see that $f_{z,v}$ is a complex geodesic of $D$. 

Hence, $f_{z,v}$ satisfies (1), (2), (3) and (4). By the compactness of $\overline{D}$, one can choose a uniform $\epsilon>0$ for every $p\in b D$ and the result is proved.

b) The argument is similar to a), using Proposition \ref{ext} instead of Proposition \ref{Huang}.
\end{proof}

\section{Worm Domains}\label{worm}

\subsection{The Worm}

We recall the definition of the Worm domains $\Omega_r$ from \cite{DiederichFornaess}.  
Let $\lambda:\mathbb R\rightarrow\mathbb R$ be a $\mathcal C^\infty$-smooth function such that 
\begin{itemize}
\item[a)] $\lambda(x)=0$ if $x\leq 0$,
\item[b)] $\lambda(x)>1$ if $x>1$,
\item[c)] $\lambda''(x)\geq 100\lambda'(x)$ for all $x$,
\item[d)] $\lambda'(x)>0$ if $x>0$, and 
\item[e)] $\lambda'(x)>100$ if $\lambda(x)>1/2$.
\end{itemize}
Let $r>1$. We set 
\begin{equation}
\rho_r(z,w):=|w+e^{i\log|z|^2}|^2-1+\lambda(\frac{1}{|z|^2}-1) + \lambda(|z|^2-r^2),
\end{equation}
and then $\Omega_r:=\{(z,w)\in\mathbb C^\ast\times \mathbb C:\rho_r(z,w)<0\}$.   We have that $\Omega_r$ is pseudoconvex, and 
that $b\Omega_r$ is strongly pseudoconvex away from the variety $Z_r:=b\Omega_r\cap \{w=0\}$.

\subsection{A Stein semi-neighbourhood basis of $\Omega_r$}

For $a, b, c>1$ we set 
\begin{equation}
\phi_{a, c}(z,w)=(z/a,  c e^{-i\log | a|^2}w)
\end{equation}
and
\begin{equation}
\Omega_{r, a, b, c}:=\phi_{ a, c}(\Omega_{ b r})
\end{equation}

\begin{lemma}\label{snbhb}
For $1< b<<2$ there exist $1< a, c< b$ such that $\Omega_r \subset \Omega_{r, a, b, c}$, 
and the boundaries of the two domains agree only along $\{w=0\}$.  Moreover, there exists $A>0$ such that 
for any $q\in b\Omega_r$ we have that 
\begin{equation}\label{distance1}
\mathrm{dist}(q,b\Omega_{r, a, b, c})\geq A\cdot\mathrm{dist}(q,\{w=0\})^2.
\end{equation}
\end{lemma}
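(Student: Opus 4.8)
\textbf{Proof proposal for Lemma \ref{snbhb}.}

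The plan is to treat the three deformation parameters $a, b, c$ as perturbations of the identity and to write $\rho_r$ for $\Omega_r$ and $\rho_{r,a,b,c}$ for $\Omega_{r,a,b,c}$ explicitly, then compare them pointwise on $b\Omega_r$. First I would compute $\rho_{r,a,b,c}$ as a function of $(z,w)$: since $\Omega_{r,a,b,c}=\phi_{a,c}(\Omega_{br})$ and $\phi_{a,c}$ is the diagonal-type map $(z,w)\mapsto(z/a, ce^{-i\log|a|^2}w)$, the defining function of $\Omega_{r,a,b,c}$ is $\rho_{br}\circ\phi_{a,c}^{-1}$, i.e. one substitutes $z\mapsto az$, $w\mapsto c^{-1}e^{i\log|a|^2}w$ into $\rho_{br}$. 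The key algebraic point is that the term $|w+e^{i\log|z|^2}|^2$ transforms nicely because $\log|az|^2=\log|z|^2+\log|a|^2$, so the phase $e^{i\log|a|^2}$ produced by $\phi_{a,c}^{-1}$ on the $w$-variable is exactly what is needed to keep $e^{i\log|z|^2}$ untwisted; this is the whole reason the map $\phi_{a,c}$ is defined with that phase factor. After this substitution one gets an expression of the same shape, $|w+e^{i\log|z|^2}|^2\cdot(\text{something close to }1)-1+\lambda(\tfrac{1}{a^2|z|^2}-1)+\lambda(a^2|z|^2-b^2r^2)$, up to the scaling $c^{-2}$ on the modulus term.

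Next I would expand to first order in $(a-1),(b-1),(c-1)$. On $b\Omega_r$ we have $\rho_r=0$, and I want to show $\rho_{r,a,b,c}\le 0$ there, with equality only on $\{w=0\}$. The difference $\rho_{r,a,b,c}-\rho_r$ splits into: (i) the contribution from rescaling the $|w+e^{i\log|z|^2}|^2$ term by $c^{-2}<1$, which contributes roughly $(c^{-2}-1)|w+e^{i\log|z|^2}|^2\le 0$ and which, on $b\Omega_r$, equals $(c^{-2}-1)(1-\lambda(\tfrac{1}{|z|^2}-1)-\lambda(|z|^2-r^2))$; (ii) the change in the two $\lambda$-terms coming from $|z|^2\mapsto a^2|z|^2$ and $r^2\mapsto b^2r^2$. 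For (ii) the monotonicity properties (a)–(e) of $\lambda$ are exactly what is needed: replacing $r$ by $br>r$ makes $\lambda(|z|^2-b^2r^2)\le\lambda(|z|^2-r^2)$ since $\lambda$ is nondecreasing, and one must check the $\lambda(\tfrac{1}{a^2|z|^2}-1)$ term and the interplay with the $a$-rescaling inside $\lambda(a^2|z|^2-r^2)$. The delicate region is near $Z_r$ where $w=0$: there $|w+e^{i\log|z|^2}|^2=1$, so the rescaling term (i) vanishes, and one needs the $\lambda$-terms alone to witness $\rho_{r,a,b,c}=0=\rho_r$ there, i.e. the parameters must be chosen so that the $a$ and $b$ deformations exactly cancel on the set where both $\lambda$-arguments vanish. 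This forces a relation like $a^2\cdot 1 = 1$ only at $|z|=1$; more precisely near $\{w=0\}\cap b\Omega_r$ one has $|z|^2$ between $1$ and $r^2$ with $\lambda(\tfrac1{|z|^2}-1)=\lambda(|z|^2-r^2)=0$, and there $\rho_r=|w+e^{i\log|z|^2}|^2-1$; I would arrange $a$ slightly larger than $1$ so that $\tfrac1{a^2|z|^2}-1<0$ and $a^2|z|^2-b^2r^2<0$ still hold on that portion, keeping the $\lambda$-terms zero, and then (i) gives $\rho_{r,a,b,c}=c^{-2}(|w+e^{i\log|z|^2}|^2-1)\le 0$ with equality iff $|w+e^{i\log|z|^2}|^2=1$, which for $w$ near $0$ means $w=0$. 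So the inclusion $\Omega_r\subset\Omega_{r,a,b,c}$ and the boundary-contact statement come out of a careful but elementary sign analysis; I would do it by choosing $b$ close to $1$ first, then $a=a(b)$ and $c=c(b)$ as explicit functions with $1<a,c<b$.

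For the quantitative estimate \eqref{distance1}, the point is that along $Z_r=\{w=0\}\cap b\Omega_r$ the two boundaries are tangent, so the distance between them vanishes there, and I expect it to vanish to second order in $\mathrm{dist}(q,\{w=0\})=|w(q)|$ (to leading order). The cleanest route: fix $q=(z_0,w_0)\in b\Omega_r$ and estimate $\mathrm{dist}(q,b\Omega_{r,a,b,c})$ from below by $|\rho_{r,a,b,c}(q)|/\sup\|\nabla\rho_{r,a,b,c}\|$ over a neighborhood (a standard lower bound for distance to a zero set of a function with nonvanishing gradient, using that $b\Omega_{r,a,b,c}$ is smooth and compact so $\|\nabla\rho_{r,a,b,c}\|$ is bounded above near $\overline{\Omega}_r$). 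Then I need a lower bound $|\rho_{r,a,b,c}(q)|\ge \mathrm{const}\cdot|w_0|^2$ for $q\in b\Omega_r$. From the computation in the previous paragraph, on the relevant portion near $Z_r$ one has $\rho_{r,a,b,c}(q)=c^{-2}(|w_0+e^{i\log|z_0|^2}|^2-1)+(\text{lower order from }\lambda)$, and since $\rho_r(q)=0$ gives $|w_0+e^{i\log|z_0|^2}|^2-1=-\lambda(\tfrac1{|z_0|^2}-1)-\lambda(|z_0|^2-r^2)$, one actually wants instead to expand $|w_0+e^{i\log|z_0|^2}|^2$ around $w_0=0$: it equals $1+2\,\mathrm{Re}(\bar w_0 e^{i\log|z_0|^2})+|w_0|^2$, and combined with the $\lambda$ terms and the constraint $\rho_r(q)=0$, the net effect is that $\rho_{r,a,b,c}(q)=(c^{-2}-1)\cdot 1 + O(|w_0|) + \dots$; this does not obviously give $|w_0|^2$. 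The main obstacle is precisely getting the right power: I expect one must use property (c), $\lambda''\ge 100\lambda'$, to control the size of $\lambda(\tfrac1{|z|^2}-1)$ and $\lambda(|z|^2-r^2)$ in terms of $|w_0|$ along $b\Omega_r$ — indeed on $b\Omega_r$ the sum of the two $\lambda$-terms equals $1-|w_0+e^{i\log|z_0|^2}|^2\le 1-(1-|w_0|)^2 = O(|w_0|)$, so each $\lambda$ argument is $O(|w_0|)$-small, hence (since $\lambda$ vanishes to infinite order at $0$ by the convexity condition, or at least is $C^\infty$ flat enough) the $\lambda$-terms are $o(|w_0|)$ or even $O(|w_0|^N)$, and then the dominant balance forces the parameters to be tuned so the $O(1)$ and $O(|w_0|)$ contributions cancel, leaving a clean $O(|w_0|^2)$ gap. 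I would make this precise by choosing $a,b,c$ depending on a single small parameter $t>0$ with $a=1+\alpha t$, $b=1+\beta t$, $c=1+\gamma t$ and matching Taylor coefficients so that $\rho_{r,a,b,c}-\rho_r$ has no constant or linear term in $w$ along $b\Omega_r$; the surviving quadratic term, with a coefficient bounded below by strict positivity coming from strong pseudoconvexity of $b\Omega_r$ off $Z_r$ and a limiting argument at $Z_r$, yields the constant $A$. This tuning-and-second-order-expansion step is where essentially all the work is, and it is the step I would expect a referee to scrutinize most.
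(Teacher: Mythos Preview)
Your proposal contains a concrete algebraic error that propagates through everything. When you substitute $w\mapsto c^{-1}e^{i\log|a|^2}w$ and $z\mapsto az$ into $\rho_{br}$, the first term does \emph{not} become $c^{-2}|w+e^{i\log|z|^2}|^2$; it becomes
\[
\bigl|c^{-1}e^{i\log|a|^2}w+e^{i\log|az|^2}\bigr|^2
=\bigl|e^{i\log|a|^2}\bigr|^2\,\bigl|c^{-1}w+e^{i\log|z|^2}\bigr|^2
=\bigl|c^{-1}w+e^{i\log|z|^2}\bigr|^2.
\]
The $c^{-1}$ acts only on $w$, not on the center $e^{i\log|z|^2}$. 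With your formula, on the portion of $b\Omega_r$ where $1\le|z|\le r$ (so both $\lambda$-terms vanish for $\Omega_r$ and, for suitable $a,b$, also for $\Omega_{r,a,b,c}$), you would get $\rho_{r,a,b,c}-\rho_r=(c^{-2}-1)\cdot 1$, a nonzero \emph{constant}. No choice of $a,b$ can tune this away while keeping $c>1$, so your scheme cannot produce boundaries that touch precisely along $\{w=0\}$: either they do not touch at all, or (if $c=1$) they coincide on an open set. This is the contradiction you yourself half-noticed when you wrote that the leading term ``does not obviously give $|w_0|^2$''; your proposed parameter-matching cannot rescue it.

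With the correct formula the whole argument becomes short and no tuning is needed. On $b\Omega_r$ in the range $1\le|z|\le r$ one has $|w+e^{i\theta}|=1$ with $\theta=\log|z|^2$, hence $2\,\mathrm{Re}(e^{-i\theta}w)=-|w|^2$, and therefore
\[
\rho_{r,a,b,c}(z,w)=|c^{-1}w+e^{i\theta}|^2-1
=c^{-2}|w|^2+2c^{-1}\mathrm{Re}(e^{-i\theta}w)
=c^{-2}(1-c)\,|w|^2,
\]
which is strictly negative for $c>1$ and $w\neq 0$, and gives the quadratic lower bound \eqref{distance1} immediately via your own $|\rho|/\|\nabla\rho\|$ idea. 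The paper reaches the same conclusion geometrically: for fixed $z$ the $w$-fiber of $\Omega_r$ is the disc of radius $1$ centered at $-e^{i\theta}$, while that of $\Omega_{r,a,b,c}$ is the disc of radius $c$ centered at $-ce^{i\theta}$; these are internally tangent at $w=0$, and the distance between their boundaries is quadratic in $|w|$. The properties (c)--(e) of $\lambda$ and the infinite-order flatness you invoke play no role here.
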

\begin{proof}
Let us first set $ c=1$.  Then the defining function for $\Omega_{r, a, b, c}$ becomes
\begin{align*}
\tilde\rho(z,w) & = |e^{i\log | a|^2}w + e^{i\log a^2|z|^2}|^2 - 1 \\
& + \lambda(\frac{1}{ a^2|z|^2} - 1) + \lambda( a^2|z|^2 -  b^2 r^2) \\
& =  |w + e^{i\log|z|^2}|^2 - 1\\
& +   \lambda(\frac{1}{ a^2|z|^2} - 1) + \lambda( a^2|z|^2 -  b^2 r^2).
\end{align*}
Then the centers of the disc-fibers given by the projection $(z,w)\mapsto z$, remain the same, but the radii of some of them change.   The radii of 
the disc fibers of $\Omega_r$ start decreasing with decreasing $|z|$ at $|z|=1$, whereas the radii of the disc fibers of  $\Omega_{r, a, b, c}$
start decreasing at $|z|=1/ a$.   And for $|z|<1/ a$ their radii will always be strictly larger than those of $\Omega_r$.   \
Next the radii of the disc fibers of $\Omega_r$ are $1$ for $1<|z|\leq r$ and start decreasing at $|z|=r$.  The radii of the disc fibers of  $\Omega_{r, a, b, c}$
are 1 for $1/ a<|z|\leq (r b/ a)$, and start decreasing at $|z|=(r b/ a)$.   Furthermore, if $ a$ is chosen close enough to $1$, then the radii of the disc fibers of $\Omega_{r, a, b, c}$ as strictly larger than those of $\Omega_r$
for $|z|\geq (r b/ a)$.   For this it is enough that
\begin{align*}
0 & <  |z|^2-r^2 - ( a^2|z|^2 -  b^2r^2)\\
& \Leftrightarrow   a^2 - 1 < \frac{r^2( b^2-1)}{|z|^2},       
\end{align*}
which clearly holds for all relevant $z$ if $ a$ is close to 1.   After having fixed $ a$, we get the conclusions 
of the lemma except for \eqref{distance1}  by choosing $ c$ close enough to 1.   \

To see that we have \eqref{distance1} we apply the global change of coordinates $\psi(z,w):=(z,e^{-i\log|z|^2}w)$
defined on $\mathbb C^*\times\mathbb C$.  An application of $\psi$ only changes distances by a factor, so 
it suffices to consider instead the domains $\psi(\Omega_r)$ and $\psi(\Omega_{r, a, b, c})$.  For these
domains all disc fibers have the same centers.  
Moreover, 
the worst case to consider is when $1\leq |z| \leq r$, so it is enough to consider the two products
$\{1\leq |z| \leq r\}\times D_1$ and $\{1\leq |z| \leq r\}\times D_ c$, where $D_1$ is the disc of radius 
1 centered at the point 1, and $D_ c$ is the disc of radius $ c$ centered at the point $ c$.
And now we need only to show that for $w\in bD_1$ we have that $\mathrm{dist}(w,bD_ c)\geq\tilde A\cdot |w|^2$, which is easy to check. 
\end{proof}

\section{A splitting lemma for biholomorphic maps on worm domains}\label{splitting}

The following is the key technical ingredient in the proof of Theorem \ref{thm4}.  It was originally proved
by Forstneri\v{c} for strongly pseudoconvex Cartan pairs in complex Stein manifolds (See e.g. \cite{Forstnericbook}), and more 
recently on Stein spaces \cite{Forstneric}. 

\begin{theorem}\label{thm:splitting}
Suppose $\overline\Omega_r=A\cup B$, where $A$ and $B$ are closed sets such that 
\begin{itemize}
\item[(i)] $\overline{(A\setminus B)}\cap\overline {(B\setminus A)}=\emptyset$ (separation condition), 
\item[(ii)] $C:=A\cap B$ is a Stein compact, and 
\item[(iii)] $A\cap Z_r=\emptyset$.
\end{itemize}
Then for any open set $\tilde C\supset C$ and $k\in\mathbb N\cup\{\infty\}$,
there exist open sets $A'\supset A, \tilde C\supset C'\supset C$
such that for any injective holomorphic map $\gamma:C'\rightarrow\mathbb C^2$ sufficiently close to the identity, 
there exist
\begin{itemize}
\item[(a)] an injective holomorphic map $\alpha:A'\rightarrow\mathbb C^2$, and
\item[(b)] an injective map $\beta:B\rightarrow\mathbb C^2$, $\beta\in C^k(B)\cap\mathcal O(B\setminus Z_r)$,
\end{itemize}
such that $\beta|_C=\gamma\circ\alpha|_C$.  Moreover, $\alpha$ and $\beta$ may be chosen to depend 
continuously on $\gamma$, such that 
\begin{equation}
\|\alpha - \mathrm{id}\|_{A'}\rightarrow 0 \mbox{ and } \|\beta - \mathrm{id}\|_{C^k(B)}\rightarrow 0 \mbox{ as } \gamma\rightarrow\mathrm{id}.
\end{equation}
Finally, $\alpha$ and $\beta$ may be chosen to vanish to any given order along a given subvariety $W$ of $\mathbb C^2$
not intersecting $C$.
\end{theorem}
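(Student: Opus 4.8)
The plan is to prove Theorem~\ref{thm:splitting} by adapting the classical Forstneri\v{c} splitting argument for Cartan pairs, replacing the $\overline\partial$-solution operators (which are unavailable because $\overline\Omega_r$ has no Stein neighbourhood basis) by estimates obtained from H\"ormander's $L^2$-method applied on a carefully chosen pseudoconvex domain. First I would set up the iteration: writing $\gamma = \mathrm{id} + c$ with $c$ small on a neighbourhood of $C$, one wants to solve the nonlinear Cousin-type problem $\beta = \gamma\circ\alpha$ on $C$ by a Newton-type scheme. The linearised problem at each step is an additive splitting $c = b - a$ (or, in multiplicative/compositional form, $c = b \circ (\text{something})^{-1} - a \circ (\cdots)$) of a holomorphic map defined near $C$ into a sum of a map holomorphic on a neighbourhood $A'$ of $A$ and a map on $B$ that is $C^k$ up to the boundary and holomorphic on $B\setminus Z_r$; the key is that this linear splitting operator is bounded (with norms controlled uniformly) and depends linearly, hence continuously, on the datum. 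Granting such a bounded linear splitting operator $T$, the usual fixed-point/contraction argument on the space of pairs $(\alpha,\beta)$ near $(\mathrm{id},\mathrm{id})$ produces the solution, with the estimates $\|\alpha-\mathrm{id}\|_{A'}\to 0$, $\|\beta-\mathrm{id}\|_{C^k(B)}\to 0$ as $\gamma\to\mathrm{id}$ following from the boundedness of $T$ and the quadratic nature of the error term.

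The construction of the bounded linear splitting operator is where the worm geometry enters and is the technical heart. Given the separation condition (i) and the fact that $C = A\cap B$ is a Stein compact, one chooses slightly shrunk open neighbourhoods and a smooth cutoff $\chi$ that is $1$ near $A\setminus B$ and $0$ near $B\setminus A$, supported in a neighbourhood of $C$. For a holomorphic datum $c$ near $C$ one forms the naive splitting $a_0 = -(1-\chi)c$-type and $b_0 = \chi c$-type pieces; these are smooth but not holomorphic, and one must correct by solving a $\overline\partial$-equation with right-hand side $\overline\partial\chi \wedge c$ (or $c\,\overline\partial\chi$), supported in the overlap. Because the overlap region, by hypothesis (iii), is disjoint from $Z_r$ and lies in the strongly pseudoconvex part of $b\Omega_r$, one can embed a neighbourhood of $\overline\Omega_r$ (or rather of the relevant part of it) into a genuinely pseudoconvex — in fact, using Lemma~\ref{snbhb}, a Stein — semi-neighbourhood $\Omega_{r,a,b,c}$, apply H\"ormander's $L^2$ existence theorem there with a plurisubharmonic weight, obtain an $L^2$ solution, and then bootstrap to $C^k$ regularity up to the boundary away from $Z_r$ using the fact that near the support of $\overline\partial\chi$ the boundary of $\Omega_r$ is strongly pseudoconvex (so Kohn-type subelliptic estimates, or the explicit local solution operators on strongly pseudoconvex pieces, apply). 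The inequality \eqref{distance1} in Lemma~\ref{snbhb} is exactly what lets one control how the solution on $\Omega_{r,a,b,c}$ restricts back to $\Omega_r$: the quadratic contact along $\{w=0\}$ guarantees that the weight can be chosen to absorb the singular behaviour near $Z_r$ while keeping $L^2$ bounds, and hence produces a map that is still $C^k$ on $B$ and holomorphic on $B\setminus Z_r$.

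After the linear operator is in hand, the remaining items are comparatively routine: the injectivity of $\alpha$ and $\beta$ for $\gamma$ close to the identity follows because both are $C^1$-close to the identity on compacta (and $\alpha$ is holomorphic, $\beta$ is an embedding near the boundary by the $C^k$ estimate), so they are local diffeomorphisms and globally injective by a standard degree/closeness argument; the continuous dependence on $\gamma$ is inherited from the continuity of $T$ and of the fixed-point construction. The final clause, that $\alpha$ and $\beta$ may be made to vanish to prescribed order along a subvariety $W$ disjoint from $C$, is achieved by running the entire scheme in the ideal sheaf $\mathcal I_W^m$: one multiplies the cutoff constructions and the $\overline\partial$ data by a suitable section vanishing on $W$, and since $W\cap C=\emptyset$ this does not affect the splitting over the overlap; H\"ormander's method accommodates the extra vanishing by an additional plurisubharmonic term $m\log|h|^2$ in the weight where $h$ cuts out $W$.

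The main obstacle I anticipate is precisely the regularity-up-to-the-boundary step for the $\overline\partial$-correction: one must produce a solution on the semi-neighbourhood that, when restricted to $\overline\Omega_r$, is $C^k$ on all of $B$ (including points of $Z_r$) and holomorphic off $Z_r$, using only $L^2$ input and the strong pseudoconvexity of $b\Omega_r$ away from $Z_r$ together with the quadratic estimate \eqref{distance1}. Balancing the $L^2$ weight so that it is singular enough to control the worm's non-Stein behaviour near $Z_r$ yet tame enough to give $C^k$ regularity where needed — and verifying that the resulting norms are uniform as $\gamma\to\mathrm{id}$ — is the delicate part; everything else is an adaptation of well-documented Cartan-pair technology.
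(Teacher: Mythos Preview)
Your outline matches the paper's architecture: a Newton-type iteration whose linear step is an additive Cousin splitting, solved by H\"ormander $L^2$-estimates on the pseudoconvex semi-neighbourhoods of Lemma~\ref{snbhb}, with the quadratic gap \eqref{distance1} playing the decisive role. Two points, however, diverge from the paper, and the first is a genuine gap in your mechanism.

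\emph{Regularity at $Z_r$.} You propose to ``bootstrap to $C^k$ up to the boundary away from $Z_r$'' via Kohn-type subelliptic estimates, then ``balance'' the weight near $Z_r$. The first half is unnecessary and the second is mis-aimed. Since the $\overline\partial$-equation is solved on the semi-neighbourhood $\tilde\Omega_s\supsetneq\overline\Omega_r\setminus Z_r$, the solution is automatically holomorphic up to $b\Omega_r\setminus Z_r$; no boundary $\overline\partial$-theory is invoked (and on the worm it would be dangerous---global regularity of $\overline\partial$-Neumann fails there, cf.\ Christ). The actual mechanism at $Z_r$ is this: take the weight $\psi(z,w)=-\mathrm{Re}(w^{-1/k})+|z|^2+|w|^2$, so that $e^{-\psi}\gtrsim C_m|w|^{-m}$ for any prescribed $m$. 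The $L^2$-bound then forces $\int|h|^2|w|^{-m}\,dV<\infty$; combined with \eqref{distance1}, which guarantees a ball of radius $\sim|w|^2$ about each $q\in\overline\Omega_r$ inside the semi-neighbourhood, the sub-mean-value inequality yields pointwise decay $|h(q)|\lesssim|w|^{m/2-4}$ and analogous derivative bounds. Choosing $m$ large gives the $C^k$ extension across $Z_r$ and, equally important, the decay needed to show that the infinite composition $\beta_1\circ\beta_3\circ\cdots$ remains well defined near $\{w=0\}$ (the paper isolates this as a separate inductive lemma). Your phrase ``singular enough \dots\ yet tame enough'' has the polarity reversed: \emph{more} singularity in the weight forces \emph{more} vanishing, hence more regularity.

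Two smaller differences. For vanishing on $W$ the paper does not add $m\log|h|^2$ to the weight; instead it writes $1=\sum_i g_if_i$ on a Stein neighbourhood of $C$ with the $f_i$ entire and vanishing to the required order on $W$, splits each $g_ic$, and multiplies back by $f_i$---algebraically cleaner, and it avoids a second singular weight interacting with the one at $w=0$. And the paper does not run a Banach fixed-point argument in a single space but an explicit iteration over shrinking Cartan pairs $(A_j,B_j,C_j)$ with tuned sequences $\delta_{2j+1}=\delta_{2j-1}^{5/4}$, $\epsilon_{2j}=\delta_{2j+1}^{8}$: passing from $L^2$ on $\Omega_j$ to sup-norm on $C_{j+1}$ costs $\delta_{j+1}^{-2}$, and the compositional error estimate costs $\delta_{j+1}^{-5}$, so the exponents are chosen so that the quadratic gain $\epsilon_j\mapsto\epsilon_j^2$ still dominates.
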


\begin{proof}
We start by doing some preparation for the proof of the theorem.   First we will construct
a sequence of domains $\Omega_j$  that decreases in a controlled way to a neighbourhood of $\Omega_r$ as in Lemma \ref{snbhb}.  
We will then be in a position to give a rough sketch of the proof.  
Then we will prepare for the vanishing on $W$.
Finally we need good estimates for solutions of linear Cousin problems on the $\Omega_j$'s.

\subsection{A decreasing sequence of domains}

For any closed set $K$ and $\nu>0$ we define
\begin{equation}
K(\nu):=\{q\in\mathbb C^2:\mathrm{dist}(z,K)\leq \nu\}.
\end{equation}
There exists $\nu_0>0$ small enough (See {\sl e.g.} the proof of Lemma 5.7.4 in \cite{Forstnericbook}) such that for any $0<\nu\leq \nu_0$ the pair $(A(\nu),B(\nu))$ still satisfies 
the separation condition corresponding to (i), and \begin{equation}
C(\nu)=A(\nu)\cap B(\nu)\subset U,
\end{equation}
where  $U$ is a fixed Stein neighborhood of $C$.

We set $C':=\mathrm{int}(C(\nu_0))$. 

Next choose $\tilde\Omega:=\Omega_{r,a,b,c}\subset A(\nu_0)\cup B(\nu_0)$ with $\Omega_{r,a,b,c}$ as in Lemma \ref{snbhb}.

If $\nu_0$ is small enough, we may let $\epsilon>0$ be small enough such that $\mathrm{dist}(A(\nu_0),\{w=0\})>\epsilon$,
and let $\psi$ be a nonnegative smooth function on $\mathbb C^2$, identically  equal
to $0$ on a neighbourhood of $|w|\leq \epsilon/4$ and identically equal to 1 on a neighbourhood of $|w|\geq \epsilon/2$. 
Let $\tilde\rho$ be a smooth defining function for $\tilde\Omega$ which 
is strictly plurisubharmonic near $\mathrm{Supp}(\psi)\cap b\tilde\Omega$.  Then for sufficiently small $s_0$ we have that 
\begin{equation}
\tilde\Omega_s:=\{\rho_s<0\} \mbox{ with }\rho_s := \tilde\rho - s\psi,
\end{equation}
is a pseudoconvex semi-neighbourhood of $\tilde\Omega$, for all $s<s_0$.  And if $a,b,c$ and $s_0$ are small enough, we 
have that $\tilde\Omega_{s_0}\subset A(\nu_0)\cup B(\nu_0)$. \

For $\widehat t<<\nu_0$ we set $A'=\tilde\Omega\cap A(\nu_0-\widehat t)$.  For an increasing sequence $t_j>0$ (to be constructed later)
with $t_j<\min\{\widehat t,s_0\}$ for all $j$, we define

\begin{itemize}
\item[(i)] $\Omega_j:=\tilde\Omega_{s_0-t_j}$, 
\item[(ii)] $A_j:=\Omega_j\cap A(\nu_0-t_j)$, 
\item[(iii)] $B_j:=\Omega_j\cap B(\nu_0-t_j)$, and 
\item[(iv)] $C_j:=A_j\cap B_j$.
\end{itemize}
We also set $t_0=0$ and $\Omega_0=\tilde\Omega_{s_0}$. \

\begin{lemma}\label{distance}
There exists a constant $k>0$ such that 
\begin{equation}
\mathrm{dist}(C_{j+1},\mathbb C^2\setminus C_j)\geq k\cdot (t_{j+1}-t_j).
\end{equation}
\end{lemma}
\begin{proof}
Note that $C_j=C(\nu_0-t_j)\cap\Omega_j$. 
Let $p\in bC_{j+1}$ and let  $q\in b C_{j}$.   Then we have four possibilities 
\begin{itemize}
\item[1)] $p\in bC(\nu_0-t_{j+1})$ and  $q\in bC(\nu_0-t_{j})$,
\item[2)]  $p\in bC(\nu_0-t_{j+1})$ and $q\in b\Omega_j$, 
\item[3)]  $p\in b\Omega_{j+1}$ and  $q\in bC(\nu_0-t_{j})$, or
\item[4)]   $p\in b\Omega_{j+1}$ and   $q\in b\Omega_j$,
\end{itemize}
and it is easy to check from these cases and the very definition of the sets that the statement holds.
\end{proof}

\subsection{Outline of the proof}

We will now give an outline of the proof.   Start by setting $c_0=c|_{C_0}$ where  $\gamma=\mathrm{id}+c$.  We will assume $\|c_0\|_{C_0}<\epsilon_0$ for 
some $\epsilon_0>0$ to be determined.   Seeking a compositional 
splitting $c_0=\beta\circ\alpha^{-1}$ we first find a linear splitting  $c_0=b_0-a_0$, where $b_0$ and $a_0$ has good 
$L^2$-estimates on $B_0$ and  $A_0$ respectively, depending on $\epsilon_0$.   Provided that $\epsilon_0,t_1,t_2$ are chosen carefully (to be explained in detail below), we 
will then get good sup-norm estimates for $b_0,a_0$ on $B_1$ and $A_1$ respectively, and then an estimate 
\begin{equation}
\|\beta_1^{-1}\circ\gamma\circ\alpha_1-\mathrm{id}\|_{C_2}\leq\epsilon_2,
\end{equation}
where $\alpha_1=\mathrm{id}+a_0, \beta_1=\mathrm{id}+b_0$, and $\epsilon_2$ is considerably smaller than $\epsilon_0$.
We then set $\gamma_2:=\beta_1^{-1}\circ\gamma\circ\alpha_1|_{C_2} = \mathrm{id}+c_2$, and repeat the process.  
Repeating the process indefinitely, provided we choose the sequences  $\{\epsilon_{2j}\}_{\j\in\mathbb N},\{t_k\}_{k\in\mathbb N}$
carfully and interdependently, we will obtain sequences $\alpha_{2j+1},\beta_{2j+1}$ of injective holomorphic maps 
defined on $A_{2j+2}$ and $B_{2j+2}$ respectively, such that 
\begin{equation}
\beta_{2k+1}^{-1}\circ\beta_{2k-1}\circ\cdot\cdot\cdot\circ\beta_1^{-1}\circ\gamma\circ\alpha_1\circ\cdot\cdot\cdot\circ\alpha_{2k-1}\circ\alpha_{2k+1}\rightarrow\mathrm{id}
\end{equation}
on  $C'$ as $k\rightarrow\infty$, and such that the compositions 
\begin{equation}
\alpha_1\circ\cdot\cdot\cdot\circ\alpha_{2k-1}\circ\alpha_{2k+1} \mbox{ and } \beta_1\circ\cdot\cdot\cdot\circ\beta_{2k-1}\circ\beta_{2k+1}
\end{equation}
converge to our desired maps $\alpha$ and  $\beta$ on $A'$ and  $B$ respectively.  

\subsection{Preparation for vanishing on $W$}

Let $f_1,...,f_N$ be entire holomorphic functions vanishing to a given order $k\in\mathbb N$ along $W$, with the property that they have no common zeroes 
on an open Stein neighbourhood $U\subset\tilde C$ of $C$.   Then by Cartan's division theorem there exist $g_j\in\mathcal O(U), j=1,...,N$, such that 
\begin{equation}
1 = \sum_{j=1}^N g_j\cdot f_j.
\end{equation} 
We will use these functions later in the proof.

\subsection{Estimates of splittings}

Note that for $k\in\mathbb N$ large enough, a suitable $k$th root $f_k(w)$ of  $1/w$ will satisfy $\mathrm{Re}(f_k)<0$ on $\Omega_r$, 
and $\mathrm{Re}(f_k(w))\leq  -C|w|^{-1/k}$.  So $\exp(-\mathrm{Re}(f_k(w)))\geq C_m\frac{1}{|w|^m}$ for any given $m$.  
We set 
\begin{equation}
\psi(z,w):= - \mathrm{Re}(f_k(w)) + |z|^2+|w|^2.
\end{equation}
By H\"{o}rmander \cite{Hormander} there exists a constant $C>0$, independent of $j$ and $m$, such that for any $\overline\partial$-closed $\omega\in L^2_{0,1}(\Omega_j,\psi)$
there exists  $h\in L^2(\Omega_j,\psi)$ with $\overline\partial h=\omega$, and
\begin{equation}
\int_{\Omega_j}|h|^2e^{-\psi}dV\leq C\cdot\int_{\Omega_j}|\omega|^2e^{-\psi}dV.
\end{equation}

\begin{lemma}\label{est1}
Let $s\in\mathbb N$.
There exists a constant $M_1>0$ such that the following holds.   Assume that we are given a holomorphic map $c_j:C_j\rightarrow\mathbb C^2$ with $\|c_j\|_{C_j}\leq\epsilon_j$, and set $t_{j+1}=t_j+\delta_{j+1}$.
Then there exist holomorphic maps $a_j:A_j\rightarrow\mathbb C^2$ and $b_j:B_j\rightarrow\mathbb C^2$, such that $c_j=b_j-a_j$, and such that 
\begin{equation}
\int_{A_j}|a_{j}|^2e^{-\psi}dV\leq M_1\epsilon_j^2, \mbox{ and }\int_{B_j}|b_{j}|^2e^{-\psi}dV\leq M_1\epsilon_j^2, 
\end{equation}
and 
\begin{equation}
\|a_j\|_{C_{j+1}}\leq \frac{M_1\epsilon_j}{\delta_{j+1}^2} \mbox{ and } \|b_j\|_{C_{j+1}}\leq \frac{M_1\epsilon_j}{\delta_{j+1}^2}.
\end{equation}
Moreover, the functions $a_j,b_j$ vanish to order $s$ along $W$.
\end{lemma}

\begin{proof}

Choose a nonegative smooth $\chi$ which is identically equal to zero in a neighbourhood of $\overline{A_{r_0}\setminus B_{r_0}}$ and 
identically equal to one  in a neighbourhood of $\overline{B_{r_0}\setminus A_{r_0}}$.  If $\widehat\tau$ is chosen small enough, the corresponding
separation conditions will hold with $r_0$ replaced by $j$ for all $j$.   \

Write 
\begin{equation}
c_j = c_j\cdot\sum_{i=1}^N g_i\cdot f_i = \sum_{i=1}^N f_i\cdot (g_i c_j).
\end{equation}
We obtain first smooth splittings of the maps $g_i c_j$ by setting $\tilde a_{ij}:=-\chi\cdot g_ic_j$ on $A_j$ and $\tilde b_{ij}:=(1-\chi)\cdot g_ic_j$
on $B_j$.  Now $\overline\partial\tilde a_{ij}=\overline\partial\tilde b_{ij}$ on $C_j$, and so we have a well defined closed $(0,1)$-form $\omega_{ij}$ on $\Omega_j$, whose 
sup-norm is proportional to that of $c_{j}$ independently of $j$. \

Now, the support of $\omega_{ij}$ is unformly bounded away from the singularity of the weight $\psi$, and so we obtain 
solutions $\overline\partial h_{ij}=\omega_{ij}$ with 

\begin{equation}
\int_{\Omega_j}|h_{ij}|^2e^{-\psi}dV\leq C'\epsilon_j^2, 
\end{equation}
where $C'$ is independent of $j$.  Setting $b_j:=\sum_{i=1}^N f_i\cdot (\tilde b_{ij}-h_{ij})$ and  $a_j:=\sum_{i=1}^N f_i\cdot (\tilde a_{ij}-h_{ij})$,
we obtain a holomorphic splitting $c_j=b_j-a_j$ with 

\begin{equation}
\int_{A_j}|a_{j}|^2e^{-\psi}dV\leq C''\epsilon^2_j, \mbox{ and }\int_{B_j}|b_{j}|^2e^{-\psi}dV\leq C''\epsilon^2_j
\end{equation}

And passing from the  $L^2$-estimate to a sup-norm estimate on $C_{j+1}$ using Lemma \ref{distance} we get 
\begin{equation}
\|a_j\|_{C_{j+1}}\leq \frac{C'''\epsilon_j}{\delta_{j+1}^2} \mbox{ and } \|b_j\|_{C_{j+1}}\leq \frac{C'''\epsilon_j}{\delta_{j+1}^2}
\end{equation}
where again $C'''$ is independent of $j$.
\end{proof}

The following lemma follows immediately from Lemma 8.7.4 in \cite{Forstnericbook}.

\begin{lemma}\label{est2}
There exists a constant $M_2>0$ such that the following holds.  Starting with the map $c_j$ on  $C_j$
from the previous lemma, assume that also $t_{j+2}=t_{j+1}+\delta_{j+1}$, and assume that 
$\frac{4M_1}{\delta_{j+1}^2}\epsilon_j<\delta_{j+1}$.  Set $\alpha_{j+1}=\mathrm{id}+a_{j}, \beta_{j+1}=\mathrm{id}+b_j$ and 
 $\gamma_{j}=\mathrm{id}+c_j$.  Set  $\gamma_{j+2}=\beta_{j+1}^{-1}\gamma_j\alpha_{j+1}=:\mathrm{id}+c_{j+2}$.  Then 
\begin{equation}
\|c_{j+2}\|_{C_{j+2}}\leq\frac{M_2}{\delta_{j+1}^{5}}\epsilon_j^2.
\end{equation}
\end{lemma}

\subsection{The proof of Theorem \ref{thm:splitting}}

Set $\gamma_0=\gamma|_{C_0}$, and write $\gamma_0=\mathrm{id}+c_0$.
To start an inductive construction, choose first $0<\delta_1<<1$ such that 
$4M_1\delta_1^{5}<1$, and $M_2\delta_1<1$ ($\delta_1$ will be further decreased several times throughout the proof).
Set $\epsilon_0=\delta_1^{8}$.  Set $\delta_2=\delta_1, t_1=\delta_1, t_2=\delta_1+\delta_2$.   By Lemma \ref{est1}
there exist $a_0:A_0\rightarrow\mathbb C^2,b_0:B_0\rightarrow\mathbb C^2$, such that  
$c_0=b_0-a_0$, and such that 
\begin{equation}
\|a_0\|_{C_{1}}\leq \frac{M_1\epsilon_0}{\delta_{1}^2} \mbox{ and } \|b_0\|_{C_{1}}\leq \frac{M_1\epsilon_0}{\delta_{1}^2}.
\end{equation}
Set $\alpha_1:=\mathrm{id}+a_0, \beta_1:=\mathrm{id}+b_0$.  Now 
\begin{equation}
\frac{4M_1}{\delta_1^2}\epsilon_0 = 4M_1\delta_1^{6}<\delta_1,
\end{equation}
so we are in the setting of Lemma \ref{est2}.  So writing $\gamma_2:=\beta_1^{-1}\gamma_0\alpha_1=\mathrm{id}+c_2$,
we get that 
\begin{equation}\label{eq}
\|c_{2}\|_{C_{2}}\leq\frac{M_2}{\delta_{1}^{5}}\epsilon_0^2=M_2\delta_1^{11}=M_2\delta_1(\delta_1^{5/4})^{8}<(\delta_1^{5/4})^8.
\end{equation}
This suggests how to define the sequences $\delta_j,\epsilon_j$ further to enable an inductive construction. 
Assume that we have defined $\delta_i$ for $i\leq 2k$ (which we have now done for $k=1$). 
Set $\delta_{2k+1}=\delta_{2k}^{5/4}$, 
and $\delta_{2k+2}=\delta_{2k+1}$.  Set $\epsilon_{2k}:=\delta_{2k+1}^{8}$, and for all $i$ set $t_i=\sum_{j=1}^i\delta_j$.   Then \eqref{eq} reads 
\begin{equation}
\|c_2\|_{C_2}\leq \delta_3^{8}=\epsilon_2.
\end{equation}

Now assume as our inductive hyposesis that we have constructed $\gamma_{2k}:C_{2k}\rightarrow\mathbb C^2, \gamma_{2k}=\mathrm{id}+c_{2k}$,
with $\|c_{2k}\|_{C_{2k}}\leq\epsilon_{2k}$ for $k=1,...,j$.  We complete the inductive step by repeating the above arguments essentially verbatim, only changing indices.  By Lemma \ref{est1}
there exist $a_{2j}:A_{2j}\rightarrow\mathbb C^2,b_{2j}:B_{2j}\rightarrow\mathbb C^2$, such that  
$c_{2j}=b_{2j}-a_{2j}$, and such that 
\begin{equation}\label{snest}
\|a_{2j}\|_{C_{2j+1}}\leq \frac{M_1\epsilon_{2j}}{\delta_{2j+1}^2} \mbox{ and } \|b_{2j}\|_{C_{2j+1}}\leq \frac{M_1\epsilon_{2j}}{\delta_{2j+1}^2}.
\end{equation}
Set $\alpha_{2j+1}:=\mathrm{id}+a_{2j}, \beta_{2j+1}:=\mathrm{id}+b_{2j}$.
Now 
\begin{equation}
\frac{4M_1}{\delta_{2j+1}^2}\epsilon_{2j} = 4M_1\delta_{2j+1}^{6}<\delta_{2j+1},
\end{equation}
so we are in the setting of Lemma \ref{est2}.  So writing $\gamma_{2j+2}:=\beta_{2j+1}^{-1}\gamma_{2j}\alpha_{2j+1}=\mathrm{id}+c_{2j+2}$,
we get that 
\begin{align*}
\|c_{2j+2}\|_{C_{2j+2}} & \leq\frac{M_2}{\delta_{2j+1}^{5}}\epsilon_{2j}^2=M_2\delta_{2j+1}^{11} \\
& =M_2\delta_{2j+1}(\delta_{2j+1}^{5/4})^{8} < \delta_{2j+3}^{8} = \epsilon_{2j+2}.
\end{align*}
This shows that the induction can go on indefinitely.   \

By the construction we see that, near $C$ we have that 
\begin{equation}
\lim_{j\rightarrow\infty}\beta(j)^{-1}\gamma_0\alpha(j):=\lim_{j\rightarrow\infty} \beta_{2j+1}^{-1}\circ\cdot\cdot\cdot\circ\beta_1^{-1}\circ\gamma_0\circ\alpha_1\circ\cdot\cdot\cdot\circ \alpha_{2j+1}=\mathrm{id}.
\end{equation}
So it remains to show that $\beta(j)$ converges to a smooth  injective map on $B$, and that $\alpha(j)$ converges to an injective map on $A'$.   \\

In the construction we defined $\alpha_{2j+1}=\mathrm{id}+a_{2j}$ and $\beta_{2j+1}=\mathrm{id}+b_{2j}$
on $A_{2j}$ and $B_{2j}$ respectively, and 
the estimates from Lemma \ref{est1} leading to the estimate \eqref{snest} were
\begin{equation}\label{intest}
\int_{A_{2j}}|a_{2j}|^2e^{-\psi}dV\leq M_1\epsilon_{2j}^2, \mbox{ and }\int_{B_{2j}}|b_{2j}|^2e^{-\psi}dV\leq M_1\epsilon_{2j}^2.
\end{equation}

We start by considering the simplest case, the convergence of the sequence $\alpha(j)$.  Note first that there exists 
an $\eta>0$ such that if $\delta_1$
is chosen small enough, then  $\mathrm{dist}(A',\mathbb C^2\setminus A_{2j})>\eta$ for all $j$.  Together with \eqref{intest}
this shows that each $\alpha_{2j+1}$ is injective holomorphic on $A'(\eta/2)$ provided $\delta_1$ is chosen 
small enough, and we get that the family $\alpha({i,j}):=\alpha_{2i+1}\circ\cdot\cdot\cdot\circ\alpha_{2j+1}, i<j$, is 
uniformly Lipschitz on $A'(\eta/4)$, with Lipschitz constant decreasing to zero with decreasing  $\delta_1$.  Then 
\begin{align*}
\|\alpha(j+1)(z)-\alpha(j)(z)\|_{A'} & = \|\alpha(j)(\alpha_{2j+1}(z))-\alpha(j)(z)\|_{A'}\\
& \leq C(\delta_1)\|a_{2j}(z)\|_{A'}\leq C(\delta_1)\delta_{2j+1},
\end{align*}
where the last inequality follows from \eqref{intest} provided $\delta_1$ is small enough.   This shows 
that $\alpha(j)$ converges to an injective holomorphic map on $A'$  provided $\delta_1$ is chosen small enough, 
and $\|c_0\|_{C'}\leq\epsilon_0=\delta_1^8$.  It is clear from the construction that the limit maps depend 
continuously on the input $\gamma$.  \

Now the same type of argument shows that for any $\epsilon>0$, the sequence $\beta(j)$ will converge uniformly 
to an injective holomorphic map on $B\cap\{|w|\geq \epsilon/2\}$, provided $\delta_1$ is chosen small enough.   
So it remains to show that the sequence $\beta(j)$ converges 
to an injective holomorphic map $\beta$ on $B\cap \{|w|\leq\epsilon\}$.

Notice first that all maps $\beta_{2j+1}$ are defined on $\Omega_{r,a,b,c}\cap\{|w|\leq\epsilon\}$.  So for all $q\in B\cap\{|w|\leq\epsilon\}$, the ball 
$B_{A|w|^2}(q)$ is contained in $B_{2j}$.  We get that
\begin{align*}
\int_{B_{A|w|^2}(q)}|b_{2j}|^2dV & \sim |w|^{m} \int_{B_{A|w|^2}(q)}|b_{2j}|^2 e^{-\psi}dV \\
& \leq |w|^{m}M_1\epsilon_{2j}^2.
\end{align*}
This shows that the sup-norm of $b_{2j}$ is comparable to $|w|^{m/2-4}\epsilon_{2j}$ and that 
the $C^1$-norm is comparable to $|w|^{m/2-5}\epsilon_{2j}$.  In particular we see that $\beta_{2j}$
is injective if $\delta_1$ was chosen small enough.   \

Next we need to show that the sequence $\beta(j)$ of compositions is well defined near $Z_r$.  We have that

\begin{align*}
\beta(j)(w) &  = \beta(j-1)(\beta_{2j+1}(q)) = \beta(j-1)(q + b_{2j}(q)),
\end{align*}
with $\|b_{2j}(q)\|\leq B\cdot |w|^k\delta_1^{8(5/4)^{j}}$ with $k=m/2-4$.
Because of \eqref{distance} the well definedness then follows from the following lemma (in which we drop restricting to odd indices), 
as long as we set $m\geq 12$.

\begin{lemma}
Let $B_k>0$ for $k\in\mathbb N, k\geq 2$ and let $\alpha>1$.  Then there exist $C_k>0, a(k)>0$ and $\delta_0>0$ such that 
the following holds.   For any $k$ define $f^k_1(x):=x + B_kx^k\delta^{\alpha}$, and define $f^k_j$  inductively by
\begin{equation}
f^k_{j+1}(x):=f^k_{j}(x + B_kx^k\delta^{\alpha^{j+1}}).
\end{equation}
Then $f^k_j(x)-x\leq C_k x^k\delta$ for all $x\in [0,a(k)]$ and $\delta\leq\delta_0$.
\end{lemma}
\begin{remark}
For simplicity we dropped the factor $8$ in the power - it would only make the estimates better. 
\end{remark}

\begin{proof}

We have that 
\begin{equation}\label{compder}
(f^k_{j+1})'(x):=(f^k_{j})'(x+B_kx^k\delta^{\alpha^{j+1}})\cdot (1+B_kkx^{k-1}\delta^{\alpha^{j+1}}).
\end{equation}

For $\tilde a(k)>0$ small, we prove by induction the statement $I_j$ that
\begin{equation}
(f^k_j)'(x)\leq \Pi_{i=1}^j (1+B_kk\delta^{\alpha^i}) \mbox{ for all } x\in [0, \tilde a(k)-B_k\tilde a(k)^k\sum_{i=1}^{j}\delta^{\alpha^i}].
\end{equation}
First of all $f_1'(x)\leq1+B_kk\delta^\alpha$ for all $x\in [0,1]$, so $I_1$ holds.  The map $x+B_kx^k\delta^{\alpha^{j+1}}$
maps the intervall $ [0, \tilde a(k)-B_k\tilde a(k)^k\sum_{i=0}^{j}\delta^{\alpha^i}]$ into the interval  $[0, \tilde a(k)-B_k\tilde a(k)^k\sum_{i=0}^{j-1}\delta^{\alpha^i}]$
and so by \eqref{compder} we get $I_{j+1}$.  This shows that the family $\{f^k_j\}$ is uniformly Lipschitz on $[0,a(k)]$, 
where $a(k)=\tilde a(k)-B_k\tilde a(k)^k\sum_{j=1}^\infty\delta^{\alpha^j}$.    So we get 

\begin{align*}
f_{j+1}(x)-x & = f_j(x+B_kx^k\delta^{\alpha^{j+1}}) - f_j(x) + f_j(x) - x \\
& = (\sum_{i=2}^j f_i(x + B_kx^k\delta^{\alpha^{i+1}})-f_i(x)) + f_1(x)-x \\
& \leq  C_kx^k\sum_{i=1}^j \delta^{\alpha^i} \leq C_k'x^k\delta.
\end{align*}

\end{proof}

So the limit $\beta=\mathrm{id}+b$ exists (near $Z_r$), and $\beta$ extends to the identity map on $Z_r$.  Finally, by 
the same scheme as above, we may control any finite $C^k$-norm up to $Z_r$ by increasing the integer $m$
in the application of the weight $\psi$.
\end{proof}

\section{Exposing points on Worm domains}

We will here briefly eplain how to prove Theorem \ref{thm3} following \cite{DiederichFornaessWold}, after having established Theorem \ref{thm:splitting} above.
The first steps in \cite{DiederichFornaessWold} provide an element $\Phi\in\Aut_{\mathrm{hol}}\mathbb C^2$ such that 
\begin{itemize}
\item[(1)] $\Phi(p)=0$,  
\item[(2)] $T_0(b\Phi(\Omega_r))=\{\mathrm{Re}(z)=0\}$,
\item[(3)] $\Phi(\overline\Omega_r)\cap\Gamma=\{0\}$, where $\Gamma:=\{w=\mathrm{Im}(z)=0, \mathrm{Re}(z)\geq 0\}$, and
\item[(4)] $b\Phi(\Omega_r)$ is strongly convex at $0$.
\end{itemize}
The existence of such a $\Phi$ relies only on the strict pseudoconvexity of $b\Omega_r$ at $p$, and no other global assumption about $\Omega_r$. \

Next set $U_\delta:=\{z\in\mathbb C:\mathrm{Re}(z)<0, |z|<\delta\}$ for some $0<\delta<<1$.  For a large $R>0$ we let, 
for each $j\in\mathbb N$, $g_j$ be a smooth map such that $g_j(z)=z$ for all  $z$ near $\overline U_\delta$, and such 
that $g_j$ embeds the interval  $I_j=[0,1/j]$ onto the interval  $[0,R]$.  By Mergelyan's theorem we may approximate 
$g_j$ by a holomorphic map $\tilde g_j$ in $C^1$-norm on $\overline U_\delta\cup I_j$, and we 
set $f_j(z):=\tilde g_j(z) + \overline{\tilde g_j(\overline z)}$.  Then by adding thinner and thinner strips around the $I_j$'s, 
we obtain a sequence of domains $U_{\delta,j}$, symmetric with respect to the $x$-axis, such that $f_j$ embeds $U_{\delta,j}$ onto a domain $V_j$, such that 
$f_j(I_j)=[0,R]$, and such that $R$ is a strongly convex globally exposed point for $V_j$, and we may achieve that $f_j\rightarrow\mathrm{id}$ uniformly on $\overline U_\delta$.  Moreover, we may achieve that $\overline U_{\delta,j}$ converges 
to the domain $U_\delta$ in the sense of Goluzin, and so letting $\psi_j:U_\delta\rightarrow U_{\delta,j}$ be 
the Riemann map symmetric with respect to the $x$-axis, we have that $\tilde f_j:=f_j\circ\psi_j$
converges uniformly to the identity map on $\overline U_\delta\setminus D_\mu(0)$ for any $\mu>0$.  \

Now for $0<\sigma<<1$ we set 
\begin{equation}
A_{\sigma}:=\{(z,w)\in\Phi(\Omega_r):\mathrm{Re}(z)\geq -\sigma , (z,w)\mbox{ close to the origin}\},
\end{equation}
and further $B_\sigma:=\overline{\Phi(\Omega_r)}\setminus A_{\sigma/2}$.  Then $(A_\sigma,B_\sigma)$ satisfies 
the hypotheses of Theorem \ref{thm:splitting}.  And setting $F_j(z,w):=(\tilde f_j(z),w)$, we have that 
$F_j\rightarrow\mathrm{id}$ uniformly on a fixed neighbourhood $\tilde C$ of $A_\sigma\cap B_\delta$.  We set $\gamma_j=F_j|_{\tilde C}$, and let $\alpha_j,\beta_j$ provide splittings as in Theorem \ref{thm:splitting}, such that $\alpha_j$ vanishes 
to order two at the origin.   Then 
the maps $\Psi_j$ defined as $\beta_j$ on $B_\sigma$ and $F_j\circ\alpha_j$ near $A_\sigma$ 
will for a sufficiently large $j$ have the property that $\phi_j=\Psi_j\circ\Phi$ embeds $\Omega_r$ into a sufficiently 
large ball with $(R,0)$ on its boundary (not necessarily centred at the origin), and 
with $\phi_j(p)=(R,0)$ a globally exposed point.   A scaling and a translation then gives the conclusion of the theorem.

\end{document}